\documentclass[12pt]{article}
\pdfoutput=1

\usepackage{amsmath,amssymb,calc}
\usepackage{amsthm}
\usepackage{thmtools}
\usepackage{mathrsfs}
\usepackage{bm}
\usepackage{bbm}
\usepackage{mathtools}
\usepackage{enumerate}
\usepackage[authoryear]{natbib}
\usepackage{caption,subcaption}
\usepackage{pgfplots}
\usepackage{chngcntr}
\usepackage{appendix}
\usepackage[colorlinks,citecolor=blue,urlcolor=blue]{hyperref}
\usepackage{cleveref}
\usepackage{crossreftools}
\usepackage{titlesec}
\usepackage{hyphenat}
\usepackage{booktabs}

\usepackage[T1]{fontenc}
\usepackage{textcomp}
\usepackage{lmodern}

\usepackage{tikz}
\usetikzlibrary{arrows,arrows.meta}
\usetikzlibrary{tikzmark}
\usepackage{graphicx}
\usepackage{centernot}
\usetikzlibrary{nfold}

\usetikzlibrary{decorations.markings}

\newlength\aetmplength

\usepackage{letltxmacro}

\LetLtxMacro\OriginalLongrightarrow\Longrightarrow
\LetLtxMacro\OriginalLongleftarrow\Longleftarrow

\usepackage{trimclip}
\DeclareRobustCommand\Longrightarrow{\NewRelbar\joinrel\Rightarrow}
\DeclareRobustCommand\Longleftarrow{\Leftarrow\joinrel\NewRelbar}

\makeatletter
\DeclareRobustCommand\NewRelbar{%
  \mathrel{%
    \mathpalette\@NewRelbar{}%
  }%
}
\newcommand*\@NewRelbar[2]{%
  \sbox0{$#1=$}%
  \sbox2{$#1\Rightarrow\m@th$}%
  \sbox4{$#1\Leftarrow\m@th$}%
  \clipbox{0pt 0pt \dimexpr(\wd2-.6\wd0) 0pt}{\copy2}%
  \kern-.2\wd0 %
  \clipbox{\dimexpr(\wd4-.6\wd0) 0pt 0pt 0pt}{\copy4}%
}
\makeatother

\usepackage{authblk}
\author[1]{Tetsuya Kaji}
\affil[1]{University of Chicago}

\title{The Hellinger Bounds on the Kullback--Leibler Divergence and the Bernstein Norm%
\thanks{This work is supported by the Richard N.\ Rosett Faculty Fellowship at the University of Chicago Booth School of Business.}}
\date{\today}

\allowdisplaybreaks[3]

\theoremstyle{plain}
\newtheorem{thm}{Theorem}
\newtheorem{prop}[thm]{Proposition}

\theoremstyle{definition}
\newtheorem*{defn}{Definition}

\theoremstyle{remark}
\newtheorem*{rem}{Remark}

\renewcommand\thmcontinues[1]{continued}

\newcommand{\exasymbol}{$\square$}

\declaretheoremstyle[
spaceabove=6pt, spacebelow=6pt,
headfont=\normalfont\bfseries,
notefont=\mdseries, notebraces={(}{)},
bodyfont=\normalfont,
postheadspace=1em,
qed=\exasymbol
]{exa}
\declaretheorem[style=exa,name=Example]{exa}

\counterwithin{subsection}{section}
\counterwithin{subsubsection}{subsection}

\crefname{thm}{Theorem}{Theorems}
\crefname{prop}{Proposition}{Propositions}
\crefname{exa}{Example}{Examples}
\crefname{section}{Section}{Sections}
\crefname{subsection}{Section}{Sections}
\crefname{subsubsection}{Section}{Sections}

\def\midd{\mathrel{\Vert}}
\DeclareMathOperator{\Var}{Var}

\DeclareMathOperator*{\conv}{\mathchoice{%
	\,\longrightarrow\,}{%
	\rightarrow}{%
	\rightarrow}{%
	\rightarrow}%
}

\usepackage[margin=1.25in]{geometry}
\usepackage[onehalfspacing,nodisplayskipstretch]{setspace}

\pagestyle{myheadings}
\pagestyle{plain}

\usepackage{accents}
\newcommand{\ubar}[1]{\underaccent{\bar}{#1}}

\begin{document}

\maketitle

\begin{abstract}
The Kullback--Leibler divergence, the Kullback--Leibler variation, and the Bernstein ``norm'' are used to quantify discrepancies among probability distributions in likelihood models such as nonparametric maximum likelihood and nonparametric Bayes.
They are closely related to the Hellinger distance, which is often easier to work with.
Consequently, it is of interest to characterize conditions under which the Hellinger distance serves as an upper bound for these measures.
This article characterizes a necessary and sufficient condition for each of the discrepancy measures to be bounded by the Hellinger distance.
It accommodates unbounded likelihood ratios and generalizes all previously known results.
We then apply it to relax the regularity condition for the sieve maximum likelihood estimator.
\end{abstract}

\section{Introduction}

Controlling the size of a function class is a central step in nonparametric statistics.
The Kullback--Leibler divergence, the Kullback--Leibler variation, and the Bernstein ``norm'' are standard measures of discrepancy in minimum contrast estimation \citep[p.~433]{vw2023} and nonparametric Bayesian analysis \citep{gv2017}.
These quantities are closely related to the Hellinger distance, which endows the space of probability distributions with a convenient Hilbert space structure.
A well\hyp{}known fact is that the Kullback--Leibler divergence bounds the Hellinger distance from above---an inequality often used, for example, to establish identification in maximum likelihood estimation \citep[Lemma 5.35]{v1998}.
The reverse inequality generally fails to hold.
For complexity control of function classes, however, the converse is typically the more wanted direction.
This asymmetry explains why general posterior contraction theorems involve two distinct types of neighborhoods \citep[p.~199]{gv2017}.

Hence, it is natural to seek sufficient conditions under which the Hellinger distance serves as an upper bound for the three discrepancy measures.
\citet[Lemma 4.4]{b1983} and \citet[(7.6)]{bm1998} showed that the Kullback--Leibler divergence is bounded by the Hellinger distance when the likelihood ratio is uniformly bounded.
\Citet[p.~327]{vw1996} and \citet[Lemma 8.3]{ggv2000} showed that the Bernstein ``norm'' of the log-likelihood ratio is bounded under the same condition.
\citet[Lemma B.3 (B.4)]{gv2017} provided the corresponding bound on the Kullback--Leibler variation.
The uniform boundedness condition, however, is arguably restrictive---particularly in models involving unbounded random variables.
Even the canonical normal location model violates this requirement.

Some attempts have been made to relax this condition.
\citet[Theorem 5]{ws1995} showed that the Kullback--Leibler divergence and variation are bounded by the Hellinger distance when a local moment of the likelihood ratio is bounded by the Hellinger distance.
This result is general enough to accommodate many models with unbounded likelihood ratios but is somewhat underused in later literature; e.g., \citet[Lemma 8.6]{ggv2000} reformulated it in a way that the multiples on the Hellinger distance diverge as the Hellinger distance tends to zero.
When the multiples diverge, the subsequent statistical application may lead to a compromised rate of convergence or contraction.

For the Bernstein ``norm'', \citet[Lemma 8.7]{ggv2000} proved an analogous bound under a similar condition for which the multiple diverges.
\citet[Lemma S.4]{kmp2023} showed that the Bernstein ``norm'' of half the log-likelihood ratio is bounded by the Hellinger distance under the condition that the likelihood ratio has a finite local moment conditional on the likelihood ratio exceeding a threshold.
This was the first to accommodate unbounded likelihood ratios for a sharp Hellinger bound on the Bernstein ``norm''.
\citet[Lemma 2.1]{kr2023} showed that the Kullback--Leibler divergence and variation are bounded under the same condition.

In this article, we identify the necessary and sufficient condition for each of the discrepancy measures to be bounded by the Hellinger distance.
We verify that the aforementioned sufficient conditions imply our necessary and sufficient conditions.
We also study the relationship between existing conditions in the literature.
Finally, we apply the results to obtain the rate of convergence of a nonparametric sieve maximum likelihood estimator under a relaxed regularity requirement, which permits the likelihood ratio to diverge faster than previously possible.

The paper is organized as follows.
\cref{sec:setup} introduces notation and definitions.
\cref{sec:bounds} develops the necessary and sufficient condition for each of the discrepancy measures to be bounded by the Hellinger distance.
\cref{sec:comparison} compares the existing conditions and the new ones.
\cref{sec:mle} applies the results to establish a new result of the convergence rate for a sieve maximum likelihood estimator.
\cref{sec:conc} concludes.

\section{Definition} \label{sec:setup}

We work with probability measures defined on a measurable space $(\mathcal{X},\mathcal{A})$.
A probability measure is denoted with a capital letter (e.g., $P$), and the corresponding density (Radon--Nikodym derivative with respect to some $\sigma$\hyp{}finite dominating measure) with a lowercase letter (e.g., $p$).
All integrals considered in this paper do not depend on the particular choice of the dominating measure; hence it is made implicit.
For example, $\int(p-q)$ means $\int_{\mathcal{X}}(p(x)-q(x))d\mu(x)$, where $\mu$ is any $\sigma$\hyp{}finite measure dominating both $P$ and $Q$.
Expectations are often written using operator notation:
\[
	P f\coloneqq\mathbb{E}_{X\sim P}[f(X)]=\int_{\mathcal{X}}f(x)dP(x).
\]

We also write $P(A)$ to denote the probability of an event $A\in\mathcal{A}$ under $P$.
Hence, for a measurable function $f$, $P(f)$ means $\mathbb{E}_{X\sim P}[f(X)]$ while $P(f\geq 0)$ refers to $\mathbb{E}_{X\sim P}[\mathbbm{1}\{f(X)\geq 0\}]$.
The notation $P(f\mid f\geq 0)$ denotes the conditional expectation $\mathbb{E}_{X\sim P}[f(X)\mid f(X)\geq 0]$, which we define to be $0$ if $P(f\geq 0)=0$.

The discrepancy measures of interest are defined as follows.

\begin{defn}[Hellinger distance]
The {\em Hellinger distance} between two probability measures $P$ and $Q$ is%
\footnote{Some authors include $1/2$ inside the integral \citep[e.g.,][]{b1983,bm1998}.}
\[
	h(p,q)\coloneqq\Bigl[\int(\sqrt{p}-\sqrt{q})^2\Bigr]^{1/2}
	=\Bigl[\int_{\mathcal{X}}\bigl(\sqrt{p(x)}-\sqrt{q(x)}\bigr)^2 d\mu(x)\Bigr]^{1/2}.
\]
\end{defn}

\begin{defn}[Kullback--Leibler divergence and variation]
The {\em Kullback--Leibler divergence} of $Q$ from $P$ is
\[
	K(p\midd q)\coloneqq P\log\frac{p}{q}=\int_{\mathcal{X}}\biggl(\log\frac{p(x)}{q(x)}\biggr)dP(x).
\]
For $k>1$, the {\em $k$th order Kullback--Leibler variation} of $Q$ from $P$ is
\[
	V_k(p\midd q)\coloneqq P\biggl|\log\frac{p}{q}\biggr|^k=\int_{\mathcal{X}}\biggl|\log\frac{p(x)}{q(x)}\biggr|^k dP(x),
\]
and the {\em $k$th order centered Kullback--Leibler variation} of $Q$ from $P$ is
\[
	V_{k,0}(p\midd q)\coloneqq P\biggl|\log\frac{p}{q}-K(p\midd q)\biggr|^k=\int_{\mathcal{X}}\biggl|\log\frac{p(x)}{q(x)}-P\log\frac{p}{q}\biggr|^k dP(x).
\]
\end{defn}

If $P(A)=0$ and $Q(A)>0$, the event $A$ is ignored in these integrals.
Meanwhile, if $P(A)>0$ and $Q(A)=0$, we may understand the integrals to be infinity.

The following Bernstein ``norm'' was introduced in \citet[Notes 3.4]{vw1996} to apply Bernstein's inequality for a maximal inequality to obtain the rate of convergence of an $M$-estimator.

\begin{defn}[Bernstein ``norm'']
The {\em Bernstein ``norm''} of a measurable function $f$ is %
\[
	\lVert f\rVert_{P,B}\coloneqq\sqrt{2P(e^{\lvert f\rvert}-1-\lvert f\rvert)}=\Bigl[2\int_{\mathcal{X}}\bigl(e^{\lvert f(x)\rvert}-1-\lvert f(x)\rvert\bigr)dP(x)\Bigr]^{1/2}.
\]
\end{defn}

This is not a true norm, as it neither is homogeneous nor satisfies the triangle inequality \citep[p.~338]{vw2023}, but satisfies the so-called Riesz property: $\lvert f\rvert\leq\lvert g\rvert$ implies $\lVert f\rVert\leq\lVert g\rVert$ \citep[p.~132]{vw2023}.
The idea is to control exponential deviation of a function.
Since $x^2\leq 2(e^{\lvert x\rvert}-1-\lvert x\rvert)$ for every $x\in\mathbb{R}$, we have $\lVert f\rVert_2\leq\lVert f\rVert_{P,B}$.
Moreover, as the exponential function grows faster than any polynomial, this dominates all $L^p$ norms for $p\geq 2$ up to a constant depending only on $p$.
The Bernstein ``norm'' naturally relates to Bernstein's inequality \citep[Lemma 2.2.10]{vw2023} and is particularly useful for minimum contrast estimation \citep[p.~433]{vw2023}.

We also make use of a ``norm'' that is equivalent to the Bernstein ``norm'' but is much more convenient when dealing with log-likelihood ratios.
Since for every $x\in\mathbb{R}$,
\begin{equation} \label{eq:norm}
	x^2\leq e^x+e^{-x}-2\leq 2(e^{\lvert x\rvert}-1-\lvert x\rvert)\leq 2(e^x+e^{-x}-2),
\end{equation}
we have
\[
	\lVert f\rVert_2\leq\sqrt{P(e^f+e^{-f}-2)}\leq\lVert f\rVert_{P,B}\leq\sqrt{2 P(e^f+e^{-f}-2)}.
\]
Therefore, the ``norm'' $\sqrt{P(e^f+e^{-f}-2)}$ is equivalent to the Bernstein ``norm''.
However, this goes well with log-likelihood ratios as it comes with various identities
\begin{equation} \label{eq:convenient}
	e^f+e^{-f}-2=
	\begin{cases}(e^f-1)(1-e^{-f}),\\
	(e^{f/2}-1)^2(1+e^{-f/2})^2,\\
	(e^{-f/2}-1)^2(1+e^{f/2})^2,\end{cases}
\end{equation}
which are particularly useful when $f$ is a log-likelihood ratio.
Finally, we occasionally use notation $x\vee y\coloneqq\max\{x,y\}$ and $x\wedge y\coloneqq\min\{x,y\}$.

\section{The Hellinger Bounds} \label{sec:bounds}

\subsection{Bernstein ``Norm''}

\Citet[Chapter 3.4]{vw1996} introduced the Bernstein ``norm'' to derive a type of maximal inequality---a bound on the supremum of the empirical process over a class of functions.
Apparently, the empirical process evaluated at a function is homogeneous in the sense that bounding $\lVert\mathbb{G}_n\rVert_{\mathcal{F}}=\sup_{f\in\mathcal{F}}\lvert\sqrt{n}(\mathbb{P}_n-P)f\rvert$ is equivalent to bounding $2\lVert\mathbb{G}_n\rVert_{2^{-1}\mathcal{F}}=2\sup_{f\in\mathcal{F}}\lvert\sqrt{n}(\mathbb{P}_n-P)(f/2)\rvert$.
However, the Bernstein ``norm'' is not homogeneous in that we always have $\lVert f\rVert_{P_0,B}\geq 2\lVert 2^{-1}f\rVert_{P_0,B}$ where the RHS can even be finite when the LHS is not.
Therefore, bounding the halved Bernstein ``norm'' is always at least as easy as bounding the original Bernstein ``norm''.
This fact was mentioned in \citet[p.~325]{vw1996} and was used by \citet{kmp2023} and \citet{kr2023} to prove the maximal inequality for a nonparametric classifier.
In this spirit, we present the necessary and sufficient condition for bounding the arbitrary {\em fractional} Bernstein ``norm''---$\lVert\delta f\rVert_{P_0,B}$ for any $\delta\in(0,1]$---when $f$ is a log-likelihood ratio.
In practice, it suffices to meet the condition for just one $\delta\in(0,1]$ to secure a maximal inequality.

Let $\delta\in(0,1]$.
The necessary and sufficient condition for the fractional Bernstein ``norm'' of the log-likelihood ratio to be bounded by the Hellinger distance,
\begin{equation} \label{eq:BN} \tag{BN}
	\biggl\|\delta\log\frac{p_0}{p}\biggr\|_{P_0,B}^2\lesssim h(p_0,p)^2,
\end{equation}
is found to be
\begin{equation} \label{asm:BN} \tag{NC}
	P_0\biggl(\biggl[\frac{p_0}{p}\biggr]^\delta\mathbbm{1}\biggl\{\frac{p_0}{p}>4\biggr\}\biggr)\lesssim h(p_0,p)^2.
\end{equation}
Precisely, the following theorem shows that when $P_0([\frac{p_0}{p}]^\delta\mathbbm{1}\{\frac{p_0}{p}>4\})\leq M h(p_0,p)^2$ holds, we have $\lVert\delta\log\frac{p_0}{p}\rVert_{P_0,B}^2\leq(18\delta+2 M)h(p_0,p)^2$, along with the other direction with a different multiple.
It also shows that the Kullback--Leibler divergence and second- or higher-order Kullback--Leibler variation are bounded by the Hellinger distance.
These additional bounds are sharp in $h$ but not in the multiples.
They are enough when we assume (\ref{asm:BN}), but if we wish to allow the multiples to diverge or if we do not need the Bernstein ``norm'', we may instead want to use \cref{lem:KL3} below.

\begin{thm}[Bernstein ``norm''; $\text{(\ref{asm:BN})}\Leftrightarrow\text{(\ref{eq:BN})}$] \label{thm:BN}
For arbitrary probability measures $P_0$ and $P$ and $\delta\in(0,1]$,
\[
	\biggl(1-\frac{1}{4^\delta}\biggr)^2 P_0\biggl(\frac{p_0^\delta}{p^\delta}\mathbbm{1}\biggl\{\frac{p_0}{p}>4\biggr\}\biggr)
	\leq \biggl\|\delta\log\frac{p_0}{p}\biggr\|_{P_0,B}^2
	\leq 18\delta h(p_0,p)^2+2 P_0\biggl(\frac{p_0^\delta}{p^\delta}\mathbbm{1}\biggl\{\frac{p_0}{p}>4\biggr\}\biggr).
\]
Moreover, we have
\begin{enumerate}[(i)]
	\item \label{thm:BN:KD}
		$h(p_0,p)^2\leq K(p_0\midd p)\leq 3 h(p_0,p)^2+\delta^{-1}P_0([\frac{p_0}{p}]^\delta\mathbbm{1}\{\frac{p_0}{p}>4\})$,
	\item \label{thm:BN:KH}
		$2^{-k}V_{k,0}(p_0\midd p)\leq V_k(p_0\midd p)\leq 2^{-1}\Gamma(k+1)\delta^{-k}\lVert\delta\log\tfrac{p_0}{p}\rVert_{P_0,B}^2$
		for every real $k\geq 2$.
\end{enumerate}
\end{thm}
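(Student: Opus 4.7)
Set $f\coloneqq\log(p_0/p)$ and partition $\mathcal{X}$ into $A_1\coloneqq\{p_0/p\leq 4\}$ and $A_2\coloneqq\{p_0/p>4\}$. By (\ref{eq:norm}), $\|\delta f\|_{P_0,B}^2$ is equivalent to $P_0(e^{\delta f}+e^{-\delta f}-2)$ up to a factor of $2$, and this quantity admits the factorizations $e^{\delta f}(1-e^{-\delta f})^2=4\sinh^2(\delta f/2)$ from (\ref{eq:convenient}). The plan is: on $A_2$ both bounds produce a constant multiple of $P_0((p_0/p)^\delta\mathbbm{1}\{A_2\})$; on $A_1$ both bounds connect to the Hellinger distance through the identity $(p_0-p)^2/p=(\sqrt{p_0}-\sqrt{p})^2(\sqrt{p_0/p}+1)^2$ together with the bound $\sqrt{p_0/p}\leq 2$ on $A_1$.

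For the lower bound, $e^{-\delta f}<4^{-\delta}$ on $A_2$ gives $e^{\delta f}(1-e^{-\delta f})^2\geq(1-4^{-\delta})^2(p_0/p)^\delta$, which integrates to the claim via the left inequality in (\ref{eq:norm}). For the upper bound, the crude estimate $e^{\delta f}+e^{-\delta f}-2\leq e^{\delta f}=(p_0/p)^\delta$ on $A_2$ (valid since $e^{-\delta f}\leq 1$) supplies the second term. On $A_1$ the key tool is the contraction $\sinh^2(\delta y)\leq\delta^2\sinh^2(y)$ for $\delta\in[0,1]$, $y\in\mathbb{R}$: the ratio $\sinh(\delta y)/\sinh(y)$ is even and approaches $\delta$ as $y\to 0$, and its quotient-rule numerator has derivative of sign $\delta^2-1\leq 0$, so it decreases. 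Therefore $P_0(e^{\delta f}+e^{-\delta f}-2)\mathbbm{1}\{A_1\}\leq\delta^2 P_0(e^f+e^{-f}-2)\mathbbm{1}\{A_1\}=\delta^2\int_{A_1}(p_0-p)^2/p\leq 9\delta^2 h^2\leq 9\delta h^2$. Doubling yields $18\delta h^2+2P_0(\cdots)$. I expect the main obstacle to be precisely this linear $\delta$: the more obvious split $(e^{\delta f/2}-e^{-\delta f/2})^2\leq 2(e^{\delta f/2}-1)^2+2(1-e^{-\delta f/2})^2$ produces a $\delta$-independent constant in front of $h^2$, and only the $\sinh^2$ contraction recovers the sharp $\delta$.

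For part (\ref{thm:BN:KD}), the lower bound $h^2\leq K$ is standard, from $-\log\sqrt{x}\geq 1-\sqrt{x}$ applied at $x=p/p_0$. For the upper bound I verify the pointwise inequality $-\log x+(x-1)\leq 3(1-\sqrt{x})^2$ on $x\geq 1/4$: substituting $y=1/\sqrt{x}$, its derivative factors as $-(y-1)(y-2)$, placing the only critical points at $x=1/4$ and $x=1$ with values $\log 4-\tfrac{3}{2}<0$ and $0$ respectively. Applying this with $x=p/p_0$ on $A_1$ and integrating gives $\int_{A_1}p_0\log(p_0/p)\leq 3h^2+\int_{A_1}(p_0-p)$; mass conservation together with $p_0>p$ on $A_2$ forces $\int_{A_1}(p_0-p)=-\int_{A_2}(p_0-p)\leq 0$, collapsing the correction. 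On $A_2$, the convexity of $\delta\mapsto x^\delta$ yields $\log x\leq\delta^{-1}(x^\delta-1)\leq\delta^{-1}x^\delta$ for $x\geq 1$, so $\int_{A_2}p_0\log(p_0/p)\leq\delta^{-1}P_0((p_0/p)^\delta\mathbbm{1}\{A_2\})$.

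For part (\ref{thm:BN:KH}), $V_{k,0}\leq 2^k V_k$ follows from $|f-K|^k\leq 2^{k-1}(|f|^k+|K|^k)$ combined with Jensen's $|K|^k=|P_0 f|^k\leq V_k$. The other bound reduces to the pointwise estimate $|y|^k\leq\Gamma(k+1)(e^{|y|}-1-|y|)$ for $y\in\mathbb{R}$ and real $k\geq 2$. For integer $k\geq 2$ this is immediate from extracting a single term of the Taylor expansion. For general real $k\geq 2$, I handle $|y|\leq 2$ via $|y|^k\leq 2^{k-2}|y|^2\leq\tfrac{\Gamma(k+1)}{2}|y|^2\leq\Gamma(k+1)(e^{|y|}-1-|y|)$, using $\Gamma(k+1)\geq 2^{k-1}$ for $k\geq 2$ (verified from $\Gamma(3)=2^1$ and $\psi(k+1)>\log 2$) and the trivial $e^{|y|}-1-|y|\geq|y|^2/2$. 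For $|y|\geq 2$, Stirling's $\Gamma(k+1)\geq\sqrt{2\pi k}(k/e)^k$ and maximizing $(|y|e/k)^k$ over $k$ at $k=|y|$ give $|y|^k/\Gamma(k+1)\leq e^{|y|}/\sqrt{2\pi k}\leq e^{|y|}/\sqrt{4\pi}\leq(1-(1+|y|)e^{-|y|})e^{|y|}=e^{|y|}-1-|y|$, completing the proof.
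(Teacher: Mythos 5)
Your proof is correct and in fact slightly sharpens the main display. The skeleton --- split on $\{p_0/p\leq 4\}$ versus $\{p_0/p>4\}$ and pass to the equivalent ``norm'' $P_0(e^{\delta f}+e^{-\delta f}-2)$ via (\ref{eq:norm}) and (\ref{eq:convenient}) --- matches the paper, as do the lower bound, part (\ref{thm:BN:KD}), and the $V_{k,0}\leq 2^kV_k$ step. You depart at two points. First, for the upper bound on the small-ratio event you establish the global contraction $\sinh^2(\delta y)\leq\delta^2\sinh^2(y)$ for $\delta\in[0,1]$ (a single monotonicity computation with no domain restriction), whereas the paper proves $(x^{\delta/2}-1)^2\leq\delta(\sqrt{x}-1)^2$ only on $x\geq 1/4$ and controls the companion factor $(1+(p_0/p)^{\delta/2})^2\leq 9$ separately; your lemma actually yields $18\delta^2 h(p_0,p)^2$, which you then relax to the stated $18\delta h(p_0,p)^2$. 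Second, for the pointwise bound $|y|^k\leq\Gamma(k+1)(e^{|y|}-1-|y|)$ with real $k\geq 2$ you split on $|y|\leq 2$ versus $|y|\geq 2$ and invoke Stirling's lower bound together with the optimization $(|y|e/k)^k\leq e^{|y|}$, while the paper observes $\Gamma(k-1)\geq t^{k-2}e^{-t}$ for every $t\geq 0$ and integrates twice, $x^k/\Gamma(k+1)=\int_0^x\int_0^y t^{k-2}/\Gamma(k-1)\,dt\,dy\leq\int_0^x\int_0^y e^t\,dt\,dy=e^x-1-x$. The paper's route avoids the case split and the numerical checks $\Gamma(k+1)\geq 2^{k-1}$ and $1/\sqrt{4\pi}\leq 1-3e^{-2}$, so it is worth knowing for economy; your argument is equally valid, and your $\sinh$ contraction is the cleaner of the two devices for the first part.
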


\begin{rem}
For $k=2$, $\Var(X)\leq\mathbb{E}[X^2]$ gives a better bound $V_{2,0}(p_0\midd p)\leq V_2(p_0\midd p)$.
\end{rem}

\begin{proof}
$\text{(\ref{eq:BN})}\Rightarrow\text{(\ref{asm:BN})}$.
We first show that $(\sqrt{x^\delta}-1)^2\leq\delta(\sqrt{x}-1)^2$ for $x\geq\frac{1}{4}$ and $0<\delta\leq 1$.
Consider $x\geq 1$, so both $(\sqrt{x^\delta}-1)^2$ and $(\sqrt{x}-1)^2$ are increasing and they coincide at $x=1$.
The derivatives are, respectively,
\[
	\delta\tfrac{1}{x^{1-\delta}}-\tfrac{\delta}{\sqrt{x}}\tfrac{1}{x^{\frac{1-\delta}{2}}}, \qquad
	\delta-\tfrac{\delta}{\sqrt{x}}.
\]
Since $1\leq x^{\frac{1-\delta}{2}}\leq x^{1-\delta}$, we see that
\[
	\delta-\tfrac{\delta}{\sqrt{x}}\geq\bigl(\delta-\tfrac{\delta}{\sqrt{x}}\bigr)\tfrac{1}{x^{1-\delta}}\geq\delta\tfrac{1}{x^{1-\delta}}-\tfrac{\delta}{\sqrt{x}}\tfrac{1}{x^{\frac{1-\delta}{2}}},
\]
so $(\sqrt{x}-1)^2$ grows faster than $(\sqrt{x^\delta}-1)^2$.
Thus, we have $(\sqrt{x^\delta}-1)^2\leq\delta(\sqrt{x}-1)^2$.

Next, consider $\frac{1}{4}\leq x<1$ so both are decreasing.
As long as $(\sqrt{x}-1)^2$ decreases faster, we have the desired inequality.
That is, we want
\[
	\Bigl(\delta\tfrac{1}{x^{1-\delta}}-\tfrac{\delta}{\sqrt{x}}\tfrac{1}{x^{\frac{1-\delta}{2}}}\Bigr)-\bigl(\delta-\tfrac{\delta}{\sqrt{x}}\bigr)
	=\Bigl[\delta\Bigl(\tfrac{1}{x^{\frac{1-\delta}{2}}}+1\Bigr)-\tfrac{\delta}{\sqrt{x}}\Bigr]\Bigl(\tfrac{1}{x^{\frac{1-\delta}{2}}}-1\Bigr)\geq 0.
\]
This is equivalent to
\[
	\Bigl(\tfrac{1}{x^{\frac{1-\delta}{2}}}+1\Bigr)-\tfrac{1}{\sqrt{x}}\geq 0
	\iff
	\delta\geq\tfrac{\log(1-\sqrt{x})}{\log\sqrt{x}}.
\]
The RHS is increasing and equal to one at $x=\frac{1}{4}$.
Since $\delta\leq 1$, this holds on $\frac{1}{4}\leq x<1$ and so does our desired inequality. Thus, $(\sqrt{x^\delta}-1)^2\leq\delta(\sqrt{x}-1)^2$ for $x\geq\frac{1}{4}$.

Now we bound the fractional Bernstein ``norm''.
Since $x^\delta+\frac{1}{x^\delta}-2<x^\delta$ for $x>4$, %
\begin{align*}
	\bigl\|\delta\log\tfrac{p_0}{p}\bigr\|_{P_0,B}^2&\leq 2 P_0\bigl(\bigl[\tfrac{p_0}{p}\bigr]^\delta+\bigl[\tfrac{p}{p_0}\bigr]^\delta-2\bigr) \tag{by (\ref{eq:norm})} \\
	&\leq 2 P_0\bigl(\bigl[\tfrac{p}{p_0}\bigr]^{\delta/2}-1\bigr)^2\bigl(1+\bigl[\tfrac{p_0}{p}\bigr]^{\delta/2}\bigr)^2\mathbbm{1}\bigl\{\tfrac{p_0}{p}\leq 4\bigr\}
	+2 P_0\bigl[\tfrac{p_0}{p}\bigr]^\delta\mathbbm{1}\bigl\{\tfrac{p_0}{p}>4\bigr\} \tag{by (\ref{eq:convenient})} \\
	&\leq 2 P_0\delta\bigl(\sqrt{\tfrac{p}{p_0}}-1\bigr)^2 (1+2)^2+2 P_0\bigl[\tfrac{p_0}{p}\bigr]^\delta\mathbbm{1}\bigl\{\tfrac{p_0}{p}>4\bigr\} \tag{paragraphs above} \\
	&\leq 18\delta h(p_0,p)^2+2 P_0\bigl[\tfrac{p_0}{p}\bigr]^\delta\mathbbm{1}\bigl\{\tfrac{p_0}{p}>4\bigr\}.
\end{align*}
$\text{(\ref{eq:BN})}\Leftarrow\text{(\ref{asm:BN})}$.
Since $(1-\frac{1}{4^\delta})^2 x^\delta<(1-\frac{1}{x^\delta})^2 x^\delta=x^\delta+\frac{1}{x^\delta}-2$ for $x>4$, by (\ref{eq:norm}),
\[
	\bigl(1-\tfrac{1}{4^\delta}\bigr)^2 P_0\bigl[\tfrac{p_0}{p}\bigr]^\delta\mathbbm{1}\bigl\{\tfrac{p_0}{p}>4\bigr\}
	\leq P_0\bigl(\bigl[\tfrac{p_0}{p}\bigr]^\delta+\bigl[\tfrac{p}{p_0}\bigr]^\delta-2\bigr)\mathbbm{1}\bigl\{\tfrac{p_0}{p}>4\bigr\}
	\leq \bigl\|\delta\log\tfrac{p_0}{p}\bigr\|_{P_0,B}^2.
\]

(\ref{thm:BN:KD})
The lower bound follows from $(\sqrt{x}-1)^2\leq x-1-\log x$ as
\[
	h(p_0,p)^2=P_0\bigl(\sqrt{\tfrac{p}{p_0}}-1\bigr)^2+P(p_0=0)
	\leq P_0\bigl(\tfrac{p}{p_0}-1-\log\tfrac{p}{p_0}\bigr)+P(p_0=0)=K(p_0\midd p).
\]
For the upper bound, since $x-1-\log x\leq 3(\sqrt{x}-1)^2$ for $x\geq\frac{1}{4}$, we can write
\begin{align*}
	K(p_0\midd p)&=P_0\bigl(\tfrac{p}{p_0}-1-\log\tfrac{p}{p_0}\bigr)+P(p_0=0)\\
	&\leq 3 P_0\bigl(\sqrt{\tfrac{p}{p_0}}-1\bigr)^2\mathbbm{1}\bigl\{\tfrac{p}{p_0}\geq\tfrac{1}{4}\bigr\}+P(p_0=0)+P_0\bigl(\tfrac{p}{p_0}-1-\log\tfrac{p}{p_0}\bigr)\mathbbm{1}\bigl\{\tfrac{p_0}{p}>4\bigr\}\\
	&\leq 3 h(p_0,p)^2+P_0\log\tfrac{p_0}{p}\mathbbm{1}\bigl\{\tfrac{p_0}{p}>4\bigr\}.
\end{align*}
Finally,
\(
	P_0\log\tfrac{p_0}{p}\mathbbm{1}\{\tfrac{p_0}{p}>4\}
	=\delta^{-1}P_0\delta\log\tfrac{p_0}{p}\mathbbm{1}\{\tfrac{p_0}{p}>4\}
	\leq\delta^{-1}P_0[\tfrac{p_0}{p}]^\delta\mathbbm{1}\{\tfrac{p_0}{p}>4\}
\).

(\ref{thm:BN:KH})
For the first inequality, observe that for $k\geq 1$,
\begin{align*}
	V_{k,0}(p_0\midd p)^{1/k}&=
	\bigl(P_0\bigl|\log\tfrac{p_0}{p}-P_0\log\tfrac{p_0}{p}\bigr|^k\bigr)^{1/k}\\
	&\leq\bigl(P_0\bigl|\log\tfrac{p_0}{p}\bigr|^k\bigr)^{1/k}+\bigl|P_0\log\tfrac{p_0}{p}\bigr| \tag{triangle inequality} \\
	&\leq\bigl(P_0\bigl|\log\tfrac{p_0}{p}\bigr|^k\bigr)^{1/k}+\bigl(P_0\bigl|\log\tfrac{p_0}{p}\bigr|^k\bigr)^{1/k} \tag{Jensen's inequality} \\
	&=2 V_k(p_0\midd p)^{1/k}.
\end{align*}
For the second inequality, it suffices to show that $x^k/\Gamma(k+1)\leq e^x-1-x$ for $k\geq 2$ and $x\geq 0$, since then by letting $x=\lvert\delta\log\frac{p_0}{p}\rvert$ we obtain
\[
	P_0\bigl|\log\tfrac{p_0}{p}\bigr|^k=\delta^{-k} P_0\bigl|\delta\log\tfrac{p_0}{p}\bigr|^k\leq 2^{-1}\Gamma(k+1)\delta^{-k}\bigl\|\delta\log\tfrac{p_0}{p}\bigr\|_{P_0,B}^2.
\]
By the definition of the gamma function, for arbitrary $t\geq 0$,
\[
	\Gamma(k-1)=\int_0^\infty z^{k-2}e^{-z}dz
	\geq\int_t^\infty z^{k-2}e^{-z}dz
	\geq t^{k-2}\int_t^\infty e^{-z}dz=t^{k-2}e^{-t}.
\]
Thus, we deduce that
\[
	\tfrac{x^k}{\Gamma(k+1)}=\int_0^x\int_0^y\tfrac{t^{k-2}}{\Gamma(k-1)}dt dy
	\leq\int_0^x\int_0^y e^t dt dy=e^x-1-x.
\]
This completes the proof.
\end{proof}

In condition (\ref{asm:BN}) and throughout the paper, the threshold of four appears frequently.
This choice is somewhat arbitrary---it can be any fixed number strictly above $1$.
We choose a square number as we deal with many square roots.
Note that (\ref{asm:BN}) is {\em not} equivalent to the one without the cutoff,
\(
	P_0([\frac{p_0}{p}]^\delta)\lesssim h(p_0,p)^2
\);
since $\frac{p_0}{p}$ approaches one as $h(p_0,p)^2\to 0$, the LHS never vanishes.
A sufficient but not necessary condition for (\ref{asm:BN}) is $P_0(\lvert\sqrt{\tfrac{p_0}{p}}-1\rvert^{2\delta})\lesssim h(p_0,p)^2$.%
\footnote{It is necessary when $\delta=1$. Then, it means that the ``reverse'' Hellinger distance is of the same order as the ``forward'' Hellinger distance.}

\subsection{Kullback--Leibler Divergence and Variation}

The next question is when the Kullback--Leibler divergence and variation are bounded by the Hellinger distance, that is, for $k\geq 2$,
\begin{align}
	K(p_0\midd p)&\lesssim h(p_0,p)^2, \label{eq:KLD} \tag{KD} \\
	V_k(p_0\midd p)&\lesssim h(p_0,p)^2. \label{eq:KLV} \tag{KV}
\end{align}
We establish that the necessary and sufficient condition for each, respectively, is
\begin{align}
	P_0\biggl(\biggl[\log\frac{p_0}{p}\biggr]\mathbbm{1}\biggl\{\frac{p_0}{p}>4\biggr\}\biggr)&\lesssim h(p_0,p)^2, \label{asm:KLD} \tag{L1} \\
	P_0\biggl(\biggl[\log\frac{p_0}{p}\biggr]^k\mathbbm{1}\biggl\{\frac{p_0}{p}>4\biggr\}\biggr)&\lesssim h(p_0,p)^2. \label{asm:KLV} \tag{Lk}
\end{align}
Since the integrand of the divergence has positive and negative parts that cancel each other, the necessity of (\ref{asm:KLD}) is not trivial.
This implies that the second set of bounds given in \citet[Lemma B.2]{gv2017} cannot be improved.

Also note that the higher the $k$, the stronger the condition.
The third point of the next theorem tells that the multiples grow at different rates.
For example, if $P_0([\log\tfrac{p_0}{p}]^2\mathbbm{1}\{\tfrac{p_0}{p}>4\})\leq M h(p_0,p)^2$, we have $P_0([\log\tfrac{p_0}{p}]\mathbbm{1}\{\tfrac{p_0}{p}>4\})\leq 4\sqrt{M}h(p_0,p)^2$.

\begin{thm}[Kullback--Leibler divergence and variation] \label{lem:KL3}
For arbitrary probability measures $P_0$ and $P$, the following hold.
\begin{enumerate}[(i)]
	\item \label{lem:KL3:KD}
		$\text{(\ref{asm:KLD})}\Leftrightarrow\text{(\ref{eq:KLD})}$:
		\[
			\frac{1}{3}P_0\biggl(\biggl[\log\frac{p_0}{p}\biggr]\mathbbm{1}\biggl\{\frac{p_0}{p}>4\biggr\}\biggr)\leq K(p_0\midd p)\leq 3 h(p_0,p)^2+P_0\biggl(\biggl[\log\frac{p_0}{p}\biggr]\mathbbm{1}\biggl\{\frac{p_0}{p}>4\biggr\}\biggr).
		\]
	\item \label{lem:KL3:KV}
		$\text{(\ref{asm:KLV})}\Leftrightarrow\text{(\ref{eq:KLV})}$:
		For every real $k\geq 2$,
		\begin{multline*}
			P_0\biggl(\biggl[\log\frac{p_0}{p}\biggr]^k\mathbbm{1}\biggl\{\frac{p_0}{p}>4\biggr\}\biggr)\leq V_k(p_0\midd p)
			\leq 4\biggl([2(\log 4)^{k-2}]\vee\biggl[\frac{k}{e}\biggr]^k\biggr)h(p_0,p)^2\\
			+P_0\biggl(\biggl[\log\frac{p_0}{p}\biggr]^k\mathbbm{1}\biggl\{\frac{p_0}{p}>4\biggr\}\biggr).
		\end{multline*}
	\item \label{lem:KL3:BTW}
		$\text{(\ref{asm:KLV})}\Rightarrow\text{(\ref{asm:KLD})}$:
		For every real $k'\geq k>0$,
		\[
			P_0\biggl(\biggl[\log\frac{p_0}{p}\biggr]^k\mathbbm{1}\biggl\{\frac{p_0}{p}>4\biggr\}\biggr)
			\leq 4 h(p_0,p)^{2(1-\frac{k}{k'})}\biggl[P_0\biggl(\biggl[\log\frac{p_0}{p}\biggr]^{k'}\mathbbm{1}\biggl\{\frac{p_0}{p}>4\biggr\}\biggr)\biggr]^{\frac{k}{k'}}.
		\]
\end{enumerate}
\end{thm}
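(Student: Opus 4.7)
The upper bound in (\ref{lem:KL3:KD}) is essentially free: my plan is to cite the intermediate step of the proof of Theorem~\ref{thm:BN}(\ref{thm:BN:KD}), which produced $K \leq 3 h^2 + P_0 \log(p_0/p) \mathbbm{1}\{p_0/p > 4\}$ en route to the tighter bound using $[p_0/p]^\delta$. For the lower bound $K \geq \tfrac{1}{3} A$ with $A := P_0 \log(p_0/p)\mathbbm{1}\{p_0/p > 4\}$, the crux is the pointwise inequality
\[
	-\log y \leq 3(y - 1 - \log y) \quad \text{for } y \in (0, 1/4],
\]
equivalently $\phi(y) := \log y - \tfrac{3(y-1)}{2} \leq 0$. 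I would verify this by observing that $\phi'(y) = 1/y - 3/2 > 0$ on $(0, 2/3)$, so $\phi$ is increasing on $(0, 1/4]$ with maximum $\phi(1/4) = -\log 4 + 9/8 < 0$. Combined with the identity $K = P_0 g(p/p_0) + P\{p_0 = 0\}$, where $g(y) := y - 1 - \log y \geq 0$, valid when $P_0 \ll P$ (and otherwise $K = A = \infty$), this yields $A = -P_0 \log(p/p_0)\mathbbm{1}\{p/p_0 < 1/4\} \leq 3 P_0 g(p/p_0) \leq 3K$.

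For (\ref{lem:KL3:KV}), the lower bound is immediate since $\log(p_0/p) \geq 0$ on $\{p_0/p > 4\}$. For the upper bound, I would split $V_k$ at $\{p_0/p > 4\}$ and bound the complementary piece $P_0 |\log(p_0/p)|^k \mathbbm{1}\{p_0/p \leq 4\}$ by $c_k h^2$ via the pointwise inequality
\[
	|\log y|^k \leq c_k (\sqrt y - 1)^2 \quad \text{on } y \geq 1/4, \qquad c_k := 4\bigl(2(\log 4)^{k-2} \vee (k/e)^k\bigr),
\]
which, when multiplied by $p_0$ and integrated using $p_0(\sqrt{p/p_0} - 1)^2 = (\sqrt p - \sqrt{p_0})^2$, yields at most $c_k h^2$. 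I would establish the pointwise bound on two sub-regions. On the middle $y \in [1/4, 4]$, the fact that $u \mapsto \log u/(u-1)$ is decreasing on $(0,\infty)$ gives $|\log u| \leq 2\log 2 \cdot |u - 1|$ for $u = \sqrt y \in [1/2, 2]$, and combined with $|\log u| \leq \log 2$ and $\log y = 2\log u$ one obtains $|\log y|^k \leq 4(\log 4)^k(\sqrt y - 1)^2 \leq 8(\log 4)^{k-2}(\sqrt y - 1)^2$, using $(\log 4)^2 \leq 2$. On the reverse tail $y > 4$, $(\sqrt y - 1)^2 \geq y/4$, so $(\log y)^k/(\sqrt y - 1)^2 \leq 4(\log y)^k/y \leq 4(k/e)^k$, since $(\log y)^k/y$ is maximized at $y = e^k$. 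The main obstacle is tracking these constants across the two sub-regions.

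Finally, for (\ref{lem:KL3:BTW}), I would apply H\"older's inequality with conjugate exponents $k'/k$ and $k'/(k'-k)$ to obtain
\[
	P_0 [\log(p_0/p)]^k \mathbbm{1}\{p_0/p > 4\} \leq \bigl(P_0 [\log(p_0/p)]^{k'} \mathbbm{1}\{p_0/p > 4\}\bigr)^{k/k'} \bigl(P_0 \mathbbm{1}\{p_0/p > 4\}\bigr)^{1-k/k'},
\]
and then bound the tail probability by $P_0\{p_0/p > 4\} = P_0\{\sqrt{p/p_0} < 1/2\} \leq 4 P_0 (1 - \sqrt{p/p_0})^2 \leq 4 h^2$, via $\mathbbm{1}\{\sqrt{p/p_0} < 1/2\} \leq 4(1-\sqrt{p/p_0})^2$. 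Combining with $4^{1-k/k'} \leq 4$ delivers the claim.
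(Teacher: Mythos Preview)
Your proposal is correct and follows essentially the same route as the paper. The pointwise inequalities you isolate---$-\log y\le 3(y-1-\log y)$ on $(0,1/4]$ for (\ref{lem:KL3:KD}), the two-region bound $|\log y|^k\le c_k(\sqrt{y}-1)^2$ on $[1/4,\infty)$ for (\ref{lem:KL3:KV}), and H\"older plus the tail estimate $P_0\{p_0/p>4\}\le 4h^2$ for (\ref{lem:KL3:BTW})---are exactly the paper's ingredients; your derivations of the constants (via monotonicity of $\log u/(u-1)$ and the maximum of $(\log y)^k/y$) just make explicit what the paper states without proof, and your organization of (\ref{lem:KL3:BTW}) (H\"older first, then bound the indicator) is a cosmetic reordering of the paper's argument (insert the weight $(1-\sqrt{p/p_0})^{2(k'-k)/k'}$ first, then H\"older).
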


\begin{proof}
(\ref{lem:KL3:KD})
$\text{(\ref{asm:KLD})}\Rightarrow\text{(\ref{eq:KLD})}$.
It is shown in the proof of \cref{thm:BN} (\ref{thm:BN:KD}).

$\text{(\ref{asm:KLD})}\Leftarrow\text{(\ref{eq:KLD})}$.
Since $\log\frac{1}{x}<3(x-1-\log x)$ for $0<x<\frac{1}{4}$,
\[
	\tfrac{1}{3}P_0\log\tfrac{p_0}{p}\mathbbm{1}\bigl\{\tfrac{p_0}{p}>4\bigr\}
	\leq P_0\bigl(\tfrac{p}{p_0}-1-\log\tfrac{p}{p_0}\bigr)\leq P_0\log\tfrac{p_0}{p}.
\]

(\ref{lem:KL3:KV})
$\text{(\ref{asm:KLV})}\Rightarrow\text{(\ref{eq:KLV})}$.
Note that $(\log x)^2\leq 8(\sqrt{x}-1)^2$ for $x\geq\frac{1}{4}$.
Hence, for $\frac{1}{4}\leq x\leq 4$ and $k\geq 2$, we have
\(
	\lvert\log x\rvert^k\leq(\log 4)^{k-2}(\log x)^2\leq 8(\log 4)^{k-2}(\sqrt{x}-1)^2
\).
Now, we want $C_k$ such that
\[
	(\log x)^k\leq C_k(\sqrt{x}-1)^2
\]
for $x>4$.
This is equivalent to bounding
\(
	\sup_{x>4}\frac{(\log x)^k}{(\sqrt{x}-1)^2}
\).
Since
\(
	\frac{(\log x)^k}{(\sqrt{x}-1)^2}<4\frac{(\log x)^k}{x}
\)
for $x>4$, we have
\[
	C_k\leq 4\sup_{x>4}\frac{(\log x)^k}{x}.
\]
This is attained at $x=e^k$, so
\(
	C_k\leq 4(\tfrac{k}{e})^k
\).
Then, the bound follows with $x=\frac{p}{p_0}$.

$\text{(\ref{asm:KLV})}\Leftarrow\text{(\ref{eq:KLV})}$.
Trivially, $P_0(\log\frac{p_0}{p})^k\mathbbm{1}\{\frac{p_0}{p}>4\}\leq P_0\lvert\log\frac{p_0}{p}\rvert^k$.

(\ref{lem:KL3:BTW})
$\text{(\ref{asm:KLV})}\Rightarrow\text{(\ref{asm:KLD})}$.
It suffices to consider $k'>k$.
Since $\frac{1}{2}<1-\sqrt{x}\leq 1$ if $\frac{1}{x}>4$,
\begin{align*}
	P_0\bigl(\log\tfrac{p_0}{p}\bigr)^k\mathbbm{1}\bigl\{\tfrac{p_0}{p}>4\bigr\}
	&\leq 4 P_0\bigl(\log\tfrac{p_0}{p}\bigr)^k\mathbbm{1}\bigl\{\tfrac{p_0}{p}>4\bigr\}\bigl(1-\sqrt{\tfrac{p}{p_0}}\bigr)^{2\frac{k'-k}{k'}} \\
	&\leq 4\bigl[P_0\bigl(\log\tfrac{p_0}{p}\bigr)^{k'}\mathbbm{1}\bigl\{\tfrac{p_0}{p}>4\bigr\}\bigr]^{\frac{k}{k'}}\bigl[P_0\bigl(1-\sqrt{\tfrac{p}{p_0}}\bigr)^2\bigr]^{\frac{k'-k}{k'}}
\end{align*}
by H\"older's inequality with $p=\frac{k'}{k}>1$ and $q=\frac{k'}{k'-k}>1$.
\end{proof}

\begin{rem}
If we want to bound the variation for $1\leq k<2$, we need to impose an assumption to control not only the event $\{\frac{p_0}{p}>4\}$ but also $\{\frac{p_0}{p}\approx 1\}$.
\end{rem}

\section{Comparison} \label{sec:comparison}

We compare condition (\ref{asm:BN}) with four other conditions.
The first one is the {\em uniform boundedness condition}:
\begin{equation} \label{asm:bounded} \tag{UB}
	\biggl\|\frac{p_0}{p}\biggr\|_\infty=\sup_{x\in\mathcal{X}}\,\biggl|\frac{p_0(x)}{p(x)}\biggr|<\infty
\end{equation}
used in \citet[(7.6)]{bm1998}, \citet[Lemma 8.3]{ggv2000}, and \citet[Lemma B.3]{gv2017}.
The second is the condition imposed in \citet[Theorem 5]{ws1995}: for some $\delta\in(0,1]$ and $M<\infty$,
\begin{equation} \label{asm:D} \tag{WS}
	P_0\biggl(\biggl[\frac{p_0}{p}\biggr]^\delta\mathbbm{1}\biggl\{\frac{p_0}{p}>e^{\frac{1}{\delta}}\biggr\}\biggr)\leq M h(p_0,p)^2,
\end{equation}
which is equivalent to (\ref{asm:BN}) but looks weaker as the threshold is made to diverge as $\delta\to 0$.
The third is the {\em finite moment condition}:
\begin{equation} \label{asm:moment} \tag{FM}
	P_0\biggl(\frac{p_0}{p}\biggr)<\infty
\end{equation}
employed in \citet[Lemma 8.7]{ggv2000} and \citet[Lemma B.2]{gv2017}.
The fourth is the {\em finite conditional moment condition}:
\begin{equation} \label{asm:local} \tag{CM}
	M=\inf_{c\geq 1}c P_0\biggl(\frac{p_0}{p}\biggm|\frac{p_0}{p}\geq\biggl[1+\frac{1}{2c}\biggr]^2\biggr)<\infty
\end{equation}
introduced in \citet[Lemma S.4]{kmp2023} and \citet[Lemma 2.1]{kr2023}.
In particular, we show that the following relationship holds:
\[
	\text{(\ref{asm:bounded})}\implies\text{(\ref{asm:local})}\implies
	\begin{array}[t]{@{}c@{}}
	\text{(\ref{asm:BN}, $\delta=1$)}\\
	\rotatebox[origin=c]{90}{\vphantom{$\centernot\Longleftrightarrow$}$\Longleftarrow$}\\
	\clap{\tikzmarknode{mo}{\text{(\ref{asm:moment})}}}
	\end{array}
	\implies
	\tikzmarknode{M}{\text{(\ref{asm:BN}, $\delta<1$)}}
\]
\begin{tikzpicture}[overlay,remember picture]
	\draw[
		double equal sign distance,
		nfold,
		implies-implies,
		line cap=round,
		shorten <=6pt,
		shorten >=6pt,
	] (M.south west) -- node[sloped]{$\centernot{}$} (mo.north east);
\end{tikzpicture}

\vspace{-1.5em}

\subsection{Comparison among Different $\delta$'s} \label{sec:reverse}

For $\delta\leq\delta'$, we obviously have
\(
	P_0([\frac{p_0}{p}]^\delta\mathbbm{1}\{\frac{p_0}{p}>4\})\leq P_0([\frac{p_0}{p}]^{\delta'}\mathbbm{1}\{\frac{p_0}{p}>4\})
\),
so (\ref{asm:BN}) for $\delta'$ implies (\ref{asm:BN}) for $\delta$.
Not only that, we can further show that the multiples grow at different rates.
For example, the following theorem implies that if $P_0(\frac{p_0}{p}\mathbbm{1}\{\frac{p_0}{p}>4\})\leq M h(p_0,p)^2$, then $P_0(\sqrt{\tfrac{p_0}{p}}\mathbbm{1}\{\frac{p_0}{p}>4\})\leq 4\sqrt{M}h(p_0,p)^2$.

\begin{prop}[$\text{(\ref{asm:BN}, $\delta'$)}\Rightarrow\text{(\ref{asm:BN}, $\delta$)}$]
For arbitrary probability measures $P_0$ and $P$ and $0<\delta\leq\delta'$,
\[
	P_0\biggl(\biggl[\frac{p_0}{p}\biggr]^\delta\mathbbm{1}\biggl\{\frac{p_0}{p}>4\biggr\}\biggr)\leq 4 h(p_0,p)^{2(1-\frac{\delta}{\delta'})} \biggl[P_0\biggl(\biggl[\frac{p_0}{p}\biggr]^{\delta'}\mathbbm{1}\biggl\{\frac{p_0}{p}>4\biggr\}\biggr)\biggr]^{\frac{\delta}{\delta'}}.
\]
That is, if (\ref{asm:BN}) holds for $\delta'$, then (\ref{asm:BN}) holds for $\delta$ with a multiple that grows slower if grows at all.
\end{prop}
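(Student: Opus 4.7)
The plan is to mimic the Hölder-with-indicator argument already used for \cref{lem:KL3} (\ref{lem:KL3:BTW}), with $(\log\tfrac{p_0}{p})^k$ replaced by $[\tfrac{p_0}{p}]^\delta$. Write $t\coloneqq\delta/\delta'\in(0,1]$; the case $t=1$ is trivial (the inequality reduces to $x\leq 4x$), so assume $t<1$. The first observation is that on the event $\{\tfrac{p_0}{p}>4\}$ one has $\sqrt{p/p_0}<1/2$, hence $1-\sqrt{p/p_0}>1/2$, so
\[
	1\leq 2^{2(1-t)}\bigl(1-\sqrt{\tfrac{p}{p_0}}\bigr)^{2(1-t)}\quad\text{on }\{\tfrac{p_0}{p}>4\}.
\]

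Inserting this factor into the integrand yields
\[
	P_0\Bigl(\bigl[\tfrac{p_0}{p}\bigr]^\delta\mathbbm{1}\bigl\{\tfrac{p_0}{p}>4\bigr\}\Bigr)
	\leq 2^{2(1-t)}P_0\Bigl(\bigl[\tfrac{p_0}{p}\bigr]^\delta\bigl(1-\sqrt{\tfrac{p}{p_0}}\bigr)^{2(1-t)}\mathbbm{1}\bigl\{\tfrac{p_0}{p}>4\bigr\}\Bigr).
\]
Next I would apply H\"older's inequality with conjugate exponents $1/t$ and $1/(1-t)$, splitting $[\tfrac{p_0}{p}]^\delta=([\tfrac{p_0}{p}]^{\delta'})^t$ and keeping $(1-\sqrt{p/p_0})^{2(1-t)}$ as the second factor, and distributing the indicator between the two factors. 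This gives
\[
	P_0\Bigl(\bigl[\tfrac{p_0}{p}\bigr]^\delta\bigl(1-\sqrt{\tfrac{p}{p_0}}\bigr)^{2(1-t)}\mathbbm{1}\bigl\{\tfrac{p_0}{p}>4\bigr\}\Bigr)
	\leq \Bigl[P_0\bigl(\bigl[\tfrac{p_0}{p}\bigr]^{\delta'}\mathbbm{1}\bigl\{\tfrac{p_0}{p}>4\bigr\}\bigr)\Bigr]^{t}\Bigl[P_0\bigl(1-\sqrt{\tfrac{p}{p_0}}\bigr)^2\Bigr]^{1-t}.
\]
Finally, $P_0(1-\sqrt{p/p_0})^2=\int(\sqrt{p_0}-\sqrt{p})^2\mathbbm{1}\{p_0>0\}\leq h(p_0,p)^2$ and $2^{2(1-t)}\leq 4$, which together deliver the stated bound.

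There is no serious obstacle; the only point requiring care is keeping the indicator $\mathbbm{1}\{\tfrac{p_0}{p}>4\}$ attached to the $[\tfrac{p_0}{p}]^{\delta'}$-factor (and not wasting it on the Hellinger factor) so that the right-hand side is the $L^{1/t}$-norm of $[\tfrac{p_0}{p}]^{\delta'}\mathbbm{1}\{\tfrac{p_0}{p}>4\}$ rather than of $[\tfrac{p_0}{p}]^{\delta'}$; and handling the boundary case $t=1$ separately so that the exponent $1/(1-t)$ does not blow up. The structure is entirely parallel to part (\ref{lem:KL3:BTW}) of \cref{lem:KL3}, with $(\log\tfrac{p_0}{p})^k$ swapped for $[\tfrac{p_0}{p}]^\delta$ and the exponent ratio $k/k'$ swapped for $\delta/\delta'$.
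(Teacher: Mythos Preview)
Your proposal is correct and follows exactly the same route as the paper: the paper's proof simply instructs the reader to mirror \cref{lem:KL3} (\ref{lem:KL3:BTW}) with $\log\tfrac{p_0}{p}$, $k$, $k'$ replaced by $\tfrac{p_0}{p}$, $\delta$, $\delta'$, which is precisely what you have written out. Your handling of the indicator (keeping it attached to the $[\tfrac{p_0}{p}]^{\delta'}$-factor via $\mathbbm{1}^t=\mathbbm{1}$) and of the boundary case $t=1$ is also in line with the paper.
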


\begin{proof}
It mirrors \cref{lem:KL3} (\ref{lem:KL3:BTW}).
Replace $\log\tfrac{p_0}{p}$, $k$, and $k'$ with $\tfrac{p_0}{p}$, $\delta$, and $\delta'$.
\end{proof}

\begin{exa}[$\text{(\ref{asm:BN}, $\delta'$)}\nLeftarrow\text{(\ref{asm:BN}, $\delta$)}$] \label{exa:1}
Let $p_0(x)=\mathbbm{1}\{0<x<1\}$ be the uniform density over $(0,1)$, and consider $p(x)=2 x\mathbbm{1}\{0<x<1\}$.
Then, it is easy to verify that (\ref{asm:BN}) for $\delta=1$ fails but (\ref{asm:BN}) for $\delta=1/2$ holds.
\end{exa}

\subsection{Condition in \citet{ws1995}} \label{sec:ws1995}

While (\ref{asm:D}) is equivalent to (\ref{asm:BN}), \citet[Theorem 5]{ws1995} used it to bound the Kullback--Leibler measures rather than the Bernstein ``norm''.
We verify that (\ref{asm:D}) is indeed a sufficient condition for (\ref{asm:KLD}) and (\ref{asm:KLV}).
The next proposition recovers the multiples of the same orders as \citet[Theorem 5]{ws1995} when combined with \cref{lem:KL3} for $k=1,2$.

\begin{prop}[$\text{(\ref{asm:D})}\Rightarrow\text{(\ref{asm:KLD}, \ref{asm:KLV})}$] \label{lem:WS}
If (\ref{asm:D}) holds for $\delta\in(0,1]$ and $M<\infty$, then for every real $k>0$,
\[
	P_0\biggl(\biggl[\log\frac{p_0}{p}\biggr]^k\mathbbm{1}\biggl\{\frac{p_0}{p}>4\biggr\}\biggr)\leq\delta^{-k}\biggl[4+\frac{e}{(\sqrt{e}-1)^2}(k\vee\log M)^k\biggr]h(p_0,p)^2.
\]
\end{prop}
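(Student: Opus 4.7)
The strategy is a single partition of $\{p_0/p>4\}$ at the threshold $T\coloneqq e^{c/\delta}$, with $c\coloneqq 1\vee k\vee\log M$. This choice of $c$ ensures $T\geq e^{1/\delta}$ (so that (\ref{asm:D}) applies on $\{p_0/p>T\}$) and $T\geq e^{k/\delta}$ (so that $x\mapsto(\log x)^k/x^\delta$ is decreasing past $T$). The single cut splits $\{p_0/p>4\}$ into $\{4<p_0/p\leq T\}$ and $\{p_0/p>T\}$, and each piece is controlled by pairing a pointwise estimate on $(\log p_0/p)^k$ with either a Hellinger tail bound or (\ref{asm:D}).

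On $\{4<p_0/p\leq T\}$, the bound $(\log p_0/p)^k\leq(c/\delta)^k$ combines with the Hellinger tail estimate $P_0(p_0/p>4)\leq 4 h(p_0,p)^2$, which follows from $(1-\sqrt{p/p_0})^2>1/4$ on $\{p_0/p>4\}$; this piece contributes $4(c/\delta)^k h(p_0,p)^2$. On $\{p_0/p>T\}$, the monotonicity of $x\mapsto(\log x)^k/x^\delta$ past $T$ yields $(\log p_0/p)^k\leq(\log T)^k T^{-\delta}(p_0/p)^\delta=(c/\delta)^k e^{-c}(p_0/p)^\delta$; combined with (\ref{asm:D}) and the inequality $Me^{-c}\leq 1$ (valid because $c\geq\log M$), this piece contributes at most $(c/\delta)^k h(p_0,p)^2$. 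Summing the two pieces yields
\[
P_0\bigl([\log(p_0/p)]^k\mathbbm{1}\{p_0/p>4\}\bigr)\leq 5\delta^{-k}c^k h(p_0,p)^2.
\]

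To finish, it suffices to show $5c^k\leq 4+\frac{e}{(\sqrt{e}-1)^2}(k\vee\log M)^k$. When $k\vee\log M\geq 1$, we have $c=k\vee\log M$ and the inequality reduces to $5x\leq 4+(e/(\sqrt{e}-1)^2)x$ with $x\geq 1$, which holds because $e/(\sqrt{e}-1)^2\approx 6.46>5$. When $k\vee\log M<1$, we have $c=1$ and the inequality becomes $(k\vee\log M)^k\geq(\sqrt{e}-1)^2/e$; since $k>0$, one has $(k\vee\log M)^k\geq k^k\geq(1/e)^{1/e}\approx 0.69$, which exceeds $(\sqrt{e}-1)^2/e\approx 0.15$.

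The main obstacle is selecting the single threshold $T=e^{c/\delta}$ with $c=1\vee k\vee\log M$ so that one cut simultaneously accommodates (\ref{asm:D}) and the monotonicity of $(\log x)^k/x^\delta$, and then verifying the numerical comparison in the boundary case $k\vee\log M<1$. The only remaining edge subcase is $T<4$ (possible only when $c=1$ and $\delta$ is close to $1$): there $\{p_0/p>4\}\subset\{p_0/p>T\}$, so only the second piece contributes, and the same final bound follows.
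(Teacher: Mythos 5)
Your proof is correct, and it takes a genuinely different route from the paper's. The paper cuts $\{p_0/p>4\}$ at the \emph{fixed} threshold $e^{1/\delta}$ and then, on the tail $\{p_0/p>e^{1/\delta}\}$, exploits the free parameter $r\in(0,1]$ in the pointwise inequality $(\log\tfrac{1}{x})^k\leq\tfrac{k^k}{e^k}\tfrac{1}{(r\delta)^k}(\tfrac{1}{x})^{r\delta}$ together with H\"older's interpolation between the $\delta$-moment (controlled by (\ref{asm:D})) and $h^2$, finally optimizing over $r$ via the same identity (\ref{min}) used to derive the pointwise bound. You instead make the threshold itself adaptive, $T=e^{c/\delta}$ with $c=1\vee k\vee\log M$, chosen so that $T\geq e^{1/\delta}$ (letting (\ref{asm:D}) apply on the tail) and $T\geq e^{k/\delta}$ (letting $x\mapsto(\log x)^k/x^\delta$ be decreasing on $[T,\infty)$). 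This converts the tail bound into the one-line pointwise estimate $(\log x)^k\leq(c/\delta)^k e^{-c}x^\delta$ followed by $Me^{-c}\leq 1$, eliminating H\"older entirely. The middle piece $\{4<p_0/p\leq T\}$ you handle by $(\log x)^k\leq(c/\delta)^k$ together with $P_0(p_0/p>4)\leq 4h^2$, which is slightly looser than the paper's $P_0(\log\tfrac{p_0}{p})^k\mathbbm{1}\{4<\tfrac{p_0}{p}\leq e^{1/\delta}\}\leq 4\delta^{-k}h^2$ (you pick up an extra factor $c^k$ here), but your tail piece compensates; the resulting $5\delta^{-k}(1\vee k\vee\log M)^k h^2$ indeed implies the paper's stated constant, as your two-case numerical comparison correctly verifies (using $k^k\geq e^{-1/e}>{(\sqrt{e}-1)^2}/{e}$ in the small-$c$ case). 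Your treatment of the edge case $T<4$ is also correct: then $4>T\geq e^{1/\delta}$, so both the monotonicity estimate and (\ref{asm:D}) still apply on all of $\{p_0/p>4\}$ and the middle piece is empty. Overall, your argument is more elementary in that it replaces the H\"older interpolation and double use of the optimization identity with a single well-chosen cut; the tradeoff is that the threshold depends on $k$ and $M$, whereas the paper's decomposition is universal and the $k\vee\log M$ dependence emerges from the $r$-optimization.
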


\begin{proof}
For $0<\delta\leq 1$ and $e^{-1/\delta}\leq x<\frac{1}{4}$, we have
\(
	(\log\tfrac{1}{x})^k\leq\tfrac{1}{\delta^k}=\tfrac{4}{\delta^k}(\tfrac{1}{\sqrt{4}}-1)^2\leq\tfrac{4}{\delta^k}(\sqrt{x}-1)^2
\),
noting that for $(\log 4)^{-1}<\delta\leq 1$, there is no $x$ that satisfies $e^{-1/\delta}\leq x<\frac{1}{4}$ so it is vacuously true.
Therefore,
\[
	P_0\bigl(\log\tfrac{p_0}{p}\bigr)^k\mathbbm{1}\bigl\{\tfrac{p_0}{p}>4\bigr\}\leq P_0\bigl(\log\tfrac{p_0}{p}\bigr)^k\mathbbm{1}\bigl\{\tfrac{p_0}{p}>e^{\frac{1}{\delta}}\bigr\}+\tfrac{4}{\delta^k}P_0\bigl(\sqrt{\tfrac{p}{p_0}}-1\bigr)^2.
\]
Observe that for $k>0$, $\eta>0$, and $B>0$,
\begin{equation} \label{min}
	\inf_{0<r\leq\eta}\tfrac{1}{r^k}B^{r}=\begin{cases}\frac{e^k}{k^k}(\log B)^k&\text{if $B>e^{\frac{k}{\eta}}$,}\\ \frac{1}{\eta^k}B^{\eta}&\text{if $B\leq e^{\frac{k}{\eta}}$.}\end{cases}
\end{equation}
Hence, for $0<r\leq 1$ and $0<x<e^{-1/\delta}$, we have
\(
	\tfrac{k^k}{e^k}\tfrac{1}{r^k\delta^k}(\tfrac{1}{x})^{r\delta}
	\geq\tfrac{k^k}{e^k}\inf_{0<r\delta}\tfrac{1}{r^k\delta^k}(\tfrac{1}{x})^{r\delta}
	=(\log\tfrac{1}{x})^k
\).
Now we see that
\begin{align*}
	P_0\bigl(\log\tfrac{p_0}{p}\bigr)^k\mathbbm{1}\bigl\{\tfrac{p_0}{p}>e^{\frac{1}{\delta}}\bigr\}&\leq\tfrac{k^k}{e^k}\tfrac{1}{r^k\delta^k}P_0(\tfrac{p_0}{p})^{r\delta}\mathbbm{1}\bigl\{\tfrac{p_0}{p}>e^{\frac{1}{\delta}}\bigr\}\\
	&\leq\tfrac{k^k}{e^k}\tfrac{1}{r^k\delta^k}P_0(\tfrac{p_0}{p})^{r\delta}\mathbbm{1}\bigl\{\tfrac{p_0}{p}>e^{\frac{1}{\delta}}\bigr\}(1-e^{-1/2})^{-2}\bigl(1-\sqrt{\tfrac{p}{p_0}}\bigr)^{2(1-r)} \tag{for $\delta\leq 1$, $r\leq 1$} \\
	&\leq\tfrac{k^k}{e^k}\tfrac{1}{r^k\delta^k}\tfrac{1}{(1-e^{-1/2})^2}\bigl[P_0\bigl(\tfrac{p_0}{p}\bigr)^{\delta}\mathbbm{1}\bigl\{\tfrac{p_0}{p}>e^{\frac{1}{\delta}}\bigr\}\bigr]^r[h(p_0,p)^2]^{1-r}\tag{H\"older's inequality for $p=\frac{1}{r}\geq 1$, $q=\frac{1}{1-r}>1$}\\
	&\leq\tfrac{k^k}{e^k}\tfrac{1}{r^k\delta^k}\tfrac{e}{(\sqrt{e}-1)^2}M^r h(p_0,p)^2. \tag{by (\ref{asm:D})}
\end{align*}
Since this holds for every $0<r\leq 1$, use (\ref{min}) again to obtain the bound.
\end{proof}

\subsection{Finite Conditional Moment Condition} \label{sec:CM}

Condition (\ref{asm:local}) was introduced in \citet[Lemma S.4]{kmp2023} and \citet[Lemma 2.1]{kr2023} and was the first to accommodate unbounded likelihood ratios for the sharp Hellinger bound on the Bernstein ``norm''.
However, it is stronger than (\ref{asm:BN}) in the following sense:
\begin{enumerate}[(a)]
	\item For fixed $P_0$ and $P$, (\ref{asm:BN}, $\delta=1$) and (\ref{asm:local}) are equivalent.
	\item A universal constant in (\ref{asm:BN}, $\delta=1$) does not imply a universal constant in (\ref{asm:local}).
\end{enumerate}
These are consequences of the following proposition and example.

\begin{prop}[$\text{(\ref{asm:local})}\Rightarrow\text{(\ref{asm:BN}, $\delta=1$)}$] \label{lem:suff}
The following hold.
\begin{enumerate}[(i)]
	\item \label{lem:stuff:1}
		If (\ref{asm:local}) holds, then
		\[
			P_0\biggl(\frac{p_0}{p}\mathbbm{1}\biggl\{\frac{p_0}{p}>4\biggr\}\biggr)\leq(2 M+1)^2 h(p_0,p)^2.
		\]
	\item \label{lem:stuff:2}
		If (\ref{asm:BN}) holds with $\delta=1$, then (\ref{asm:local}) holds but $M$ can be arbitrarily large.
\end{enumerate}
\end{prop}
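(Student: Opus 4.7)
The plan for (\ref{lem:stuff:1}) is to pick a near-optimizer $c^\ast$ of the infimum in (\ref{asm:local}) and convert the conditional-moment bound into a Hellinger bound. Given $\epsilon > 0$, choose $c^\ast \geq 1$ with $c^\ast P_0(\tfrac{p_0}{p} \mid \tfrac{p_0}{p} \geq \alpha_{c^\ast}^2) \leq M + \epsilon$, writing $\alpha_c := 1 + \tfrac{1}{2c}$ for short. Three facts will combine: (a) since $c^\ast \geq 1$ forces $\alpha_{c^\ast}^2 \leq \tfrac{9}{4} < 4$, the event $\{p_0/p > 4\}$ lies inside $\{p_0/p \geq \alpha_{c^\ast}^2\}$; (b) on the latter event $(1 - \sqrt{p/p_0})^2 \geq (1 - 1/\alpha_{c^\ast})^2 = 1/(2c^\ast+1)^2$, which integrated against $p_0$ gives $h(p_0,p)^2 \geq P_0(p_0/p \geq \alpha_{c^\ast}^2)/(2c^\ast+1)^2$; (c) the conditional mean is at least the conditioning threshold, so $c^\ast + 1 \leq c^\ast \alpha_{c^\ast}^2 \leq M + \epsilon$, forcing $c^\ast \leq M + \epsilon - 1$. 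Chaining (a) and (b) gives
\[
P_0\Bigl(\tfrac{p_0}{p}\mathbbm{1}\bigl\{\tfrac{p_0}{p} > 4\bigr\}\Bigr) \leq \tfrac{M+\epsilon}{c^\ast}\,(2c^\ast+1)^2 h(p_0,p)^2 = (M+\epsilon)\bigl(4c^\ast + 4 + \tfrac{1}{c^\ast}\bigr) h(p_0,p)^2;
\]
invoking (c) together with $1/c^\ast \leq 1$ bounds the parenthesized factor by $4(M+\epsilon) + 1$, so the whole expression is $\leq (M+\epsilon)(4(M+\epsilon)+1) h^2 \leq (2(M+\epsilon)+1)^2 h^2$. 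Letting $\epsilon \downarrow 0$ finishes. The hard part will be step (c): without this constraint on $c^\ast$, the factor $(2c^\ast+1)^2$ would be uncontrolled and the $M^2$-dependence in the target bound would be lost.

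For (\ref{lem:stuff:2}), first the implication (\ref{asm:BN}) with $\delta = 1$ $\Rightarrow$ (\ref{asm:local}): specialize the infimum to $c = 1$ and bound $\phi(1) := P_0(\tfrac{p_0}{p} \mid \tfrac{p_0}{p} \geq \tfrac{9}{4})$. Splitting the numerator as $P_0(\tfrac{p_0}{p}\mathbbm{1}\{\tfrac{9}{4} \leq \tfrac{p_0}{p} \leq 4\}) + P_0(\tfrac{p_0}{p}\mathbbm{1}\{\tfrac{p_0}{p} > 4\})$ and using the BN constant $K$ on the second piece along with the universal bound $h(p_0,p)^2 \leq 2$, the numerator is $\leq 4 + K h(p_0,p)^2 \leq 4 + 2K$. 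If the denominator vanishes, $\phi(1) = 0$ by the convention declared in \cref{sec:setup}; otherwise $\phi(1) < \infty$. Either way $M \leq \phi(1) < \infty$.

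For the claim that $M$ can be arbitrarily large, the plan is to exhibit a one-parameter Bernoulli family: take $\mathcal{X} = \{0, 1\}$ with $P_0(\{1\}) = q$ and $P(\{1\}) = q/S$ for $q \in (0, 1)$ and $S \geq \tfrac{9}{4}$. Then $p_0/p(1) = S$ while $p_0/p(0) = (1-q)/(1-q/S) < 1$. Since $p_0/p(0) < 1 < \alpha_c^2$ for every $c \geq 1$, the conditioning event $\{p_0/p \geq \alpha_c^2\}$ equals $\{1\}$ independently of $c$, whence $\phi(c) = cS$ and $M = S$. Letting $S \to \infty$ makes $M$ as large as desired, showing that the constant in (\ref{asm:local}) is not uniformly controlled by the constant in (\ref{asm:BN}, $\delta = 1$).
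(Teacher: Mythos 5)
Your proof of part (\ref{lem:stuff:1}) is correct and follows essentially the same route as the paper: lower-bound $h(p_0,p)^2$ by $(1-1/\alpha_{c})^2 P_0(p_0/p\geq\alpha_c^2)$, factor the tail moment as a conditional mean times a tail probability, and use the elementary inequality ``conditional mean $\geq$ threshold'' to control $c$ in terms of $M$. The only cosmetic difference is that you work with a near-optimizer $c^\ast$ and let $\epsilon\downarrow 0$, whereas the paper asserts the infimum is attained at some finite $c\leq 1\vee M$; both are fine, and yours sidesteps any attainment question. (You should explicitly note that if the conditioning event at $c^\ast$ has $P_0$-probability zero, then $P_0(p_0/p>4)=0$ as well and the bound is trivial --- your step (c) quietly assumes positive probability --- but this is a one-line patch.)

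The Bernoulli example in part (\ref{lem:stuff:2}), however, does not prove the stated claim. Take your family with $P_0(\{1\})=q$ fixed and $P(\{1\})=q/S$, $S>4$. Then $P_0\bigl(\tfrac{p_0}{p}\mathbbm{1}\{\tfrac{p_0}{p}>4\}\bigr)=qS$, while
\[
h(p_0,p)^2 = q\Bigl(1-\tfrac{1}{\sqrt{S}}\Bigr)^2 + \bigl(\sqrt{1-q}-\sqrt{1-q/S}\bigr)^2 \longrightarrow 2-2\sqrt{1-q}
\]
as $S\to\infty$, which is a bounded constant depending only on $q$. So the ratio $qS/h(p_0,p)^2$ diverges together with $M=S$; you have constructed a sequence for which the (\ref{asm:BN}, $\delta=1$) constant \emph{also} blows up, which says nothing about whether (\ref{asm:local}) can fail uniformly while (\ref{asm:BN}) holds uniformly. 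The structural obstacle is that on a two-point space $p_0/p$ takes only two values, so the conditional mean in (\ref{asm:local}) has no ``bulk'' region $\{1<p_0/p\leq 4\}$ to dilute the tail; the conditional expectation collapses to $S$ exactly, and the two constants are locked together. The paper's counterexample (\cref{exa:doom}) specifically arranges a three-region density so that the conditioning event $\{p_0/p\geq C\}$ for $C\leq 9/4$ is dominated by a large region where $p_0/p$ is only slightly above $1$, which drives the conditional mean up (and hence $M$) while the actual tail integral $P_0(\tfrac{p_0}{p}\mathbbm{1}\{\tfrac{p_0}{p}>4\})$ stays of the same order as $h^2$. You need at least three distinct likelihood-ratio levels to reproduce that effect.
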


\begin{proof}
(\ref{lem:stuff:1})
Note that the infimum in (\ref{asm:local}) is attained at some finite $c\leq 1\vee M$.
Denote $C=[1+\frac{1}{2c}]^2$.
Since $C>1$,
\begin{multline*}
	h(p_0,p)^2
	\geq\int(\sqrt{p_0}-\sqrt{p})^2\mathbbm{1}\bigl\{\tfrac{p_0}{p}\geq C\bigr\}
	\geq\int\bigl(\sqrt{p_0}-\sqrt{\tfrac{p_0}{C}}\bigr)^2\mathbbm{1}\bigl\{\tfrac{p_0}{p}\geq C\bigr\} \\
	=\bigl(1-\tfrac{1}{\sqrt{C}}\bigr)^2 P_0\bigl(\tfrac{p_0}{p}\geq C\bigr)
	=\tfrac{1}{(2c+1)^2}P_0\bigl(\tfrac{p_0}{p}\geq C\bigr).
\end{multline*}
This implies
\[
	P_0\tfrac{p_0}{p}\mathbbm{1}\bigl\{\tfrac{p_0}{p}>4\bigr\}
	\leq P_0\tfrac{p_0}{p}\mathbbm{1}\bigl\{\tfrac{p_0}{p}\geq C\bigr\}
	=P_0\bigl(\tfrac{p_0}{p}\bigm|\tfrac{p_0}{p}\geq C\bigr)P_0\bigl(\tfrac{p_0}{p}\geq C\bigr)
	\leq\tfrac{(2c+1)^2}{c}M h(p_0,p)^2.
\]
Use $1\leq c\leq 1\vee M$ to complete the proof.

(\ref{lem:stuff:2})
Trivially, if (\ref{asm:BN}) holds for $\delta=1$, then (\ref{asm:local}) holds with some $M<\infty$.
\cref{exa:doom} demonstrates that $M$ can be arbitrarily large.
\end{proof}

\begin{exa}[$\text{(\ref{asm:local})}\nLeftarrow\text{(\ref{asm:BN}, $\delta=1$)}$] \label{exa:doom}
Let $p_0$ be the uniform density over $(0,1)$. %
For $\theta\in[0,1/4)$, consider the model
\[
	p_\theta(x)=\begin{cases}\theta&0<x<\theta^2,\\1-\theta&\theta^2\leq x<1-\theta,\\\frac{1-\theta^3-(1-\theta)(1-\theta-\theta^2)}{\theta}&1-\theta\leq x<1.\end{cases}
\]
Then, $h(p_0,p_\theta)^2$ is approximately linear at $\theta=0$ with slope $3-2\sqrt{2}$ and $P_0(\frac{p_0}{p_\theta}\mathbbm{1}\{\frac{p_0}{p_\theta}>4\})=\theta$.
Therefore, (\ref{asm:BN}) is satisfied for $\delta=1$ and the multiple $M=(3-2\sqrt{2})^{-1}$.
Meanwhile, if we pick $c$ such that $\frac{1}{1-\theta}<[1+\frac{1}{2c}]^2$, then%
\footnote{Since $c\geq 1$ and $\theta<1/4$, $[1+\frac{1}{2c}]^2<\frac{1}{\theta}$ is granted.}
\[
	P_0\biggl(\frac{p_0}{p_\theta}\biggm|\frac{p_0}{p_\theta}\geq\biggl[1+\frac{1}{2c}\biggr]^2\biggr)=\frac{1}{\theta}\conv\infty \qquad \text{as}\quad\theta\to 0.
\]
If we pick $c$ such that $\frac{1}{1-\theta}\geq[1+\frac{1}{2c}]^2$, then %
\[
	c P_0\biggl(\frac{p_0}{p_\theta}\biggm|\frac{p_0}{p_\theta}\geq\biggl[1+\frac{1}{2c}\biggr]^2\biggr)\geq\frac{1}{2}\frac{\sqrt{1-\theta}}{1-\sqrt{1-\theta}}\cdot\frac{\frac{1}{\theta}\theta^2+\frac{1}{1-\theta}(1-\theta-\theta^2)}{1-\theta}\conv\infty.
\]
Therefore, $M$ in (\ref{asm:local}) can be arbitrarily large.
\end{exa}

This gain in generality is due to the localization by an unconditional expectation.
When we want to achieve $P_0(\sqrt{\frac{p_0}{p}}-1)^2\lesssim h(p_0,p)^2$, it does not have to be that
\[
	P_0\biggl(\sqrt{\frac{p_0}{p}}-1\biggr)^2\mathbbm{1}\biggl\{\frac{p_0}{p}>C\biggr\}\lesssim\int(\sqrt{p_0}-\sqrt{p})^2\mathbbm{1}\biggl\{\frac{p_0}{p}>C\biggr\}
\]
event by event.
Condition (\ref{asm:local}) requires that this hold for the local unfavorable event $1<C\leq\frac{9}{4}$, while condition (\ref{asm:BN}) allows for the possibility that
\[
	P_0\biggl(\sqrt{\frac{p_0}{p}}-1\biggr)^2\mathbbm{1}\biggl\{\frac{p_0}{p}>4\biggr\}\lesssim\int(\sqrt{p_0}-\sqrt{p})^2\mathbbm{1}\biggl\{\frac{p_0}{p}\leq 4\biggr\}.
\]

\subsection{Uniform Boundedness Condition} \label{sec:UB}

Condition (\ref{asm:bounded}), which was used in \citet[(7.6)]{bm1998}, \citet[Lemma 8.3]{ggv2000}, and \citet[Lemma B.3]{gv2017}, requires that the likelihood ratio be uniformly bounded on the entire support.
This condition is arguably restrictive, especially when the underlying random variables are unbounded, since then $p_0$ and $p$ can take arbitrarily small values.

It trivially implies the finite conditional moment, hence all other conditions.

\begin{prop}[$\text{(\ref{asm:bounded})}\Rightarrow\text{(\ref{asm:local})}$]
If (\ref{asm:bounded}) holds, then (\ref{asm:local}) holds for $M\leq\lVert\frac{p_0}{p}\rVert_\infty$.
\end{prop}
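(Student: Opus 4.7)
The plan is to exploit the boundedness of the likelihood ratio inside the conditional expectation directly, and then simply evaluate the infimum defining $M$ at a convenient value of $c$.

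Let $L\coloneqq\lVert\frac{p_0}{p}\rVert_\infty<\infty$. The key observation is that $\frac{p_0}{p}\leq L$ pointwise ($P_0$-a.s., since $P_0(p=0)=0$ whenever $L<\infty$), so for every measurable event $A$ with $P_0(A)>0$,
\[
	P_0\Bigl(\tfrac{p_0}{p}\Bigm| A\Bigr)=\frac{P_0\bigl(\tfrac{p_0}{p}\mathbbm{1}_A\bigr)}{P_0(A)}\leq L,
\]
and if $P_0(A)=0$ the conditional expectation is defined to be $0$ by convention in this paper. Apply this to $A=\{\tfrac{p_0}{p}\geq[1+\tfrac{1}{2c}]^2\}$ for any $c\geq 1$.

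Now evaluate the infimum in \eqref{asm:local} at $c=1$:
\[
	M\leq 1\cdot P_0\Bigl(\tfrac{p_0}{p}\Bigm|\tfrac{p_0}{p}\geq\bigl[1+\tfrac{1}{2}\bigr]^2\Bigr)\leq L=\bigl\lVert\tfrac{p_0}{p}\bigr\rVert_\infty,
\]
which yields the claim. There is no real obstacle here; the statement is essentially a one-line consequence of the pointwise bound together with the fact that conditional expectations inherit pointwise bounds. The only thing worth noting is the degenerate case where $P_0(\tfrac{p_0}{p}\geq 9/4)=0$, in which case the conditional expectation is $0$ by convention and the bound holds trivially with $M=0\leq L$.
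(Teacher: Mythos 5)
Your proof is correct and takes essentially the same approach as the paper's: evaluate the infimum at $c=1$ and bound the resulting conditional expectation by the $L^\infty$ norm of the likelihood ratio. You are slightly more careful in spelling out the degenerate case $P_0(\frac{p_0}{p}\geq 9/4)=0$, but the argument is the same one-liner.
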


\begin{proof}
$M=\inf_{c\geq 1}c P_0(\frac{p_0}{p}\mid\frac{p_0}{p}\geq[1+\frac{1}{2c}]^2)\leq P_0(\frac{p_0}{p}\mid\frac{p_0}{p}\geq\frac{9}{4})\leq\lVert\frac{p_0}{p}\rVert_\infty$.
\end{proof}

This example shows that the normal location model satisfies (\ref{asm:local}) but not (\ref{asm:bounded}).

\begin{exa}[$\text{(\ref{asm:bounded})}\nLeftarrow\text{(\ref{asm:local})}$; Normal location model] \label{exa:normal}
Let $\mathcal{P}=\{P_\theta=N(\theta,1):\theta\in\mathbb{R}\}$ and $P_0=N(0,1)$.
Then, we have
\[
	\frac{p_0(x)}{p_\theta(x)}=\exp\biggl(-\theta x+\frac{\theta^2}{2}\biggr).
\]
Hence, $\lVert\frac{p_0}{p_\theta}\rVert_\infty$ is infinity for $\theta\neq 0$, failing (\ref{asm:bounded}).
Meanwhile,
\[
	\inf_{c\geq 1}c P_0\biggl(\frac{p_0}{p_\theta}\biggm|\frac{p_0}{p_\theta}\geq\biggl[1+\frac{1}{2c}\biggr]^2\biggr)
	\leq P_0\biggl(\frac{p_0}{p_\theta}\biggm|\frac{p_0}{p_\theta}\geq e\biggr)
	=\frac{e^{\theta^2}\Phi(\frac{\lvert\theta\rvert}{2}-\frac{1}{\lvert\theta\rvert}+\lvert\theta\rvert)}{\Phi(\frac{\lvert\theta\rvert}{2}-\frac{1}{\lvert\theta\rvert})}.
\]
This is finite for every $\theta$ (it approaches $1$ as $\theta\to 0$).
Therefore, (\ref{asm:local}) holds on every bounded set of $\theta$.
\end{exa}

\subsection{Finite Moment Condition} \label{sec:moment}

Condition (\ref{asm:moment}) was employed in \citet[Lemma 8.7]{ggv2000} and \citet[Lemma B.2]{gv2017} to derive Hellinger bounds whose multiples diverge as the Hellinger distance converges to zero.

Conditions (\ref{asm:moment}) and (\ref{asm:BN}, $\delta<1$) do not imply each other, while each of them is implied by (\ref{asm:BN}, $\delta=1$).

\begin{exa}[$\text{(\ref{asm:moment})}\nLeftarrow\text{(\ref{asm:BN}, $\delta<1$)}$]
In \cref{exa:1}, we have $P_0(\frac{p_0}{p})=\infty$ while (\ref{asm:BN}, $\delta=1/2$) holds.
Thus, (\ref{asm:moment}) is not even necessary for the Hellinger dominance.
\end{exa}

The next example shows that (\ref{asm:BN}, $\delta<1$) is not implied by (\ref{asm:moment}), and hence neither is (\ref{asm:BN}, $\delta=1$) implied by (\ref{asm:moment}).

\begin{exa}[$\text{(\ref{asm:moment})}\nRightarrow\text{(\ref{asm:BN}, $\delta<1$)}$] \label{exa:counter}
First, we show that $\sup_p P_0(\frac{p_0}{p})<\infty$ does not imply (\ref{asm:BN}) for $\delta<1$ with a universal constant $M<\infty$.
Let $p_0(x)=\mathbbm{1}\{0<x<1\}$ be the uniform density over $(0,1)$, and consider the model
\[
	p_\theta(x)=\begin{cases}\theta&0<x\leq\theta,\\1+\theta&\theta<x<1,\end{cases}
\]
indexed by $\theta\in[0,1/4)$.
Hence, $p_\theta$ equals $p_0$ when $\theta=0$.
This model satisfies (\ref{asm:moment}) since
\[
	P_0\biggl(\frac{p_0}{p_\theta}\biggr)=1+\frac{1-\theta}{1+\theta}\leq 2.
\]
Meanwhile, we have
\begin{gather*}
	P_0\biggl(\sqrt{\frac{p_0}{p_\theta}}\mathbbm{1}\biggl\{\frac{p_0}{p_\theta}>4\biggr\}\biggr)=\sqrt{\theta},\\
	h(p_0,p_\theta)^2=\theta(1-\sqrt{\theta})^2+(1+\theta)(1-\sqrt{1+\theta})^2\approx\theta.
\end{gather*}
Thus, (\ref{asm:BN}, $\delta=1/2$) fails along $\theta\to 0$.
We can further verify that Hellinger dominance fails as $\theta\to 0$.
\end{exa}

We have already shown that (\ref{asm:BN}, $\delta<1$) is implied by (\ref{asm:BN}, $\delta=1$) in \cref{sec:reverse}, so what remains to be proved is that (\ref{asm:moment}) is implied by (\ref{asm:BN}, $\delta=1$).

\begin{prop}[$\text{(\ref{asm:BN}, $\delta=1$)}\Rightarrow\text{(\ref{asm:moment})}$]
If (\ref{asm:BN}) holds for $\delta=1$, then (\ref{asm:moment}) holds.
\end{prop}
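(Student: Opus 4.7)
The plan is to chain the lower bound from \cref{thm:BN} with a trivial truncation of $P_0(\tfrac{p_0}{p})$ at the threshold $4$. First, I would observe that if (\ref{eq:BN}) holds with $\delta=1$, then $\|\log\tfrac{p_0}{p}\|_{P_0,B}^2 \leq C\,h(p_0,p)^2$ for some $C<\infty$, and since $h(p_0,p)^2\leq 2$ always, this quantity is finite. The lower bound established in \cref{thm:BN}, namely $\tfrac{9}{16}\,P_0(\tfrac{p_0}{p}\mathbbm{1}\{\tfrac{p_0}{p}>4\}) \leq \|\log\tfrac{p_0}{p}\|_{P_0,B}^2$, then immediately yields $P_0(\tfrac{p_0}{p}\mathbbm{1}\{\tfrac{p_0}{p}>4\})<\infty$.

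Next I would split
\[
P_0\Bigl(\tfrac{p_0}{p}\Bigr) = P_0\Bigl(\tfrac{p_0}{p}\mathbbm{1}\bigl\{\tfrac{p_0}{p}\leq 4\bigr\}\Bigr) + P_0\Bigl(\tfrac{p_0}{p}\mathbbm{1}\bigl\{\tfrac{p_0}{p}>4\bigr\}\Bigr).
\]
The first summand is bounded by $4$ using the event restriction (and $P_0$ being a probability measure), while the second is finite by the previous step. This delivers (\ref{asm:moment}). I do not anticipate any serious obstacle; the proposition is essentially a corollary of the lower bound in \cref{thm:BN} combined with the trivial truncation bound on the small values of the likelihood ratio, and consequently the proof should be a one-line display followed by the decomposition above.
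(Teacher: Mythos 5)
Your proof is correct, but you take an unnecessary detour. The hypothesis is (\ref{asm:BN}) for $\delta=1$, i.e., $P_0(\tfrac{p_0}{p}\mathbbm{1}\{\tfrac{p_0}{p}>4\})\leq M h(p_0,p)^2$, which together with $h(p_0,p)^2\leq 2$ already gives finiteness of the tail piece directly; you don't need to pass through (\ref{eq:BN}) via the forward direction of \cref{thm:BN} and then come back down through the lower-bound inequality $\tfrac{9}{16}P_0(\tfrac{p_0}{p}\mathbbm{1}\{\tfrac{p_0}{p}>4\})\leq\lVert\log\tfrac{p_0}{p}\rVert_{P_0,B}^2$. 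That round-trip is circular and uses nothing beyond what the hypothesis states. Once the tail piece is handled, your trivial bound $P_0(\tfrac{p_0}{p}\mathbbm{1}\{\tfrac{p_0}{p}\leq 4\})\leq 4$ is fine and finishes the proof.

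The paper's proof is structurally similar (split at the threshold $4$) but differs in the treatment of the non-tail piece: instead of the crude constant $4$, the paper writes $\tfrac{p_0}{p}\mathbbm{1}\{\tfrac{p_0}{p}\leq 4\}=(1-\sqrt{\tfrac{p}{p_0}})\sqrt{\tfrac{p_0}{p}}(\sqrt{\tfrac{p_0}{p}}+1)\mathbbm{1}\{\tfrac{p_0}{p}\leq 4\}+\mathbbm{1}\{\tfrac{p_0}{p}\leq 4\}$ and applies Cauchy--Schwarz to obtain the quantitative bound $6 h(p_0,p)+1$. That refinement costs nothing for the stated qualitative conclusion ($<\infty$), so both arguments are valid, but the paper's version expresses the full $P_0(\tfrac{p_0}{p})$ in terms of $h(p_0,p)$, which is more informative. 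The substantive suggestion for you: drop the round-trip through the Bernstein norm and use (\ref{asm:BN}) directly.
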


\begin{proof}
By the Cauchy--Schwarz inequality,
\begin{align*}
	P_0\tfrac{p_0}{p}
	&=P_0\tfrac{p_0}{p}\mathbbm{1}\bigl\{\tfrac{p_0}{p}>4\bigr\}+P_0\bigl(1-\sqrt{\tfrac{p}{p_0}}\bigr)\sqrt{\tfrac{p_0}{p}}\bigl(\sqrt{\tfrac{p_0}{p}}+1\bigr)\mathbbm{1}\bigl\{\tfrac{p_0}{p}\leq 4\bigr\}+P_0\bigl(\tfrac{p_0}{p}\leq 4\bigr)\\
	&\leq P_0\tfrac{p_0}{p}\mathbbm{1}\bigl\{\tfrac{p_0}{p}>4\bigr\}+6 \sqrt{P_0\bigl(1-\sqrt{\tfrac{p}{p_0}}\bigr)^2}+1
	\leq P_0\tfrac{p_0}{p}\mathbbm{1}\bigl\{\tfrac{p_0}{p}>4\bigr\}+6 h(p_0,p)+1,
\end{align*}
which is finite under (\ref{asm:BN}, $\delta=1$) since $h(p_0,p)^2\leq 2$ by construction.
\end{proof}

\section{Application to Maximum Likelihood Estimation} \label{sec:mle}

We now apply \cref{thm:BN,lem:KL3} to relax the bounded likelihood ratio condition in nonparametric maximum likelihood estimation, namely \citet[Theorem 3.4.12]{vw2023}.
To control the complexity of a class of functions, we use the bracketing integral.

\begin{defn}[Bracketing number and integral]
Let $d$ be a premetric on real\hyp{}valued functions that is compatible with pointwise partial ordering \citep[p.~528]{gv2017}, that is, (i) $d(f,f)=0$, (ii) $d(f,g)=d(g,f)\geq 0$, and (iii) $d(l,u)=\sup\{d(f,g):l\leq f,g\leq u\}$ for every $f,g,l,u$.
A pair of functions $[l,u]$ is called an {\em $\varepsilon$\hyp{}bracket} if $\ell\leq u$ and $d(l,u)<\varepsilon$.
The {\em bracketing number} $N_{[]}(\varepsilon,\mathcal{F},d)$ is the minimum number of $\varepsilon$\hyp{}brackets needed to cover a set of functions $\mathcal{F}$.%
\footnote{The bracketing functions need not come from $\mathcal{F}$ but are confined to the function space defined by $d$. This means that if $d$ is (derived from) a norm, the brackets need to have finite norms \citep[Definition 2.1.6]{vw2023}.}
The {\em nonstandardized bracketing integral} is defined by
\[
	\tilde{J}_{[]}(\delta,\mathcal{F},d)\coloneqq\int_0^\delta\sqrt{1+\log N_{[]}(\varepsilon,\mathcal{F},d)}d\varepsilon.
\]
\end{defn}

The bracketing integral is an increasing and concave function in $\delta$; hence, for $c\geq 1$, we have $\tilde{J}_{[]}(c\delta,\mathcal{F},d)\leq c\tilde{J}_{[]}(\delta,\mathcal{F},d)$.

The next theorem generalizes the sieve maximum likelihood theorem of \citet[Theorem 3.4.12]{vw2023} to models with unbounded likelihood ratios.%
\footnote{Another similar result is \citet[Theorem F.4]{gv2017}.
This is a special case of \citet[Theorem 3.4.12]{vw2023} with an additional assumption that $p_0\in\mathcal{P}_n$, so one can always take $p_n=p_0$ in the theorem statement.}
Let $\mathbb{P}_n$ denote the empirical measure of an i.i.d.~sample $X_1,X_2,\dots,X_n$.

\begin{thm}[Rate of convergence of sieve MLE] \label{thm:mle}
Let $X_1,X_2,\dots$ be an independent sequence from a probability distribution $P_0$.
Let $\mathcal{P}_n$ be a sequence of arbitrary sets of probability distributions, and denote by $\mathcal{P}_{n,\delta}=\{p\in\mathcal{P}_n:h(p_0,p)\leq\delta\}$ the $\delta$\hyp{}neighborhood of $p_0$ with respect to the Hellinger distance.
Suppose there exist sequences $p_n\in\mathcal{P}_n$ and $\delta_n\geq 0$ and $M\in[0,\infty)$ that satisfy the following three conditions: %
\begin{gather}
	h(p_0,p_n)\lesssim\delta_n, \label{asm:delta} \\
	P_0\biggl(\biggl[\log\frac{p_0}{p_n}\biggr]^2\mathbbm{1}\biggl\{\frac{p_0}{p_n}>4\biggr\}\biggr)\leq M \delta_n^2, \label{asm:Mn} \\
	\tilde{J}_{[]}(\delta_n,\mathcal{P}_{n,\delta_n},h)\leq\delta_n^2\sqrt{n}. \label{asm:entropy}
\end{gather}
Then, the approximate maximizer $\hat{p}_n\in\mathcal{P}_n$ of the likelihood $p\mapsto\prod_{i=1}^n p(X_i)$ in the sense that
\begin{equation}
	\mathbb{P}_n\log\hat{p}_n\geq\mathbb{P}_n\log p_n-O_P(\delta_n^2) \label{asm:max}
\end{equation}
satisfies $h(p_0,\hat{p}_n)=O_P(\delta_n\vee n^{-1/2})$.
\end{thm}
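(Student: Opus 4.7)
The proof will follow the standard scheme for rates of convergence of sieve $M$-estimators (\citealp[Theorem 3.4.1]{vw2023}), where the uniform boundedness condition on the likelihood ratio is replaced by the Hellinger bounds developed in \cref{thm:BN,lem:KL3}. The key device is the \emph{halving trick}: work with
\[
	m_p(x)\coloneqq\log\frac{p(x)+p_n(x)}{2 p_n(x)},
\]
so that $m_p\geq -\log 2$ pointwise and $m_{p_n}\equiv 0$. By concavity of $\log$, $\log\bar p\geq(\log p+\log p_n)/2$ with $\bar p=(p+p_n)/2$, so combining with (\ref{asm:max}) produces the basic inequality $\mathbb{P}_n m_{\hat p_n}\geq\tfrac{1}{2}\mathbb{P}_n\log(\hat p_n/p_n)\geq -O_P(\delta_n^2)$.

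The second step is a population \emph{contrast inequality}: writing $-P_0 m_p=K(p_0\midd\bar p)-K(p_0\midd p_n)$, the second term is $\lesssim\delta_n^2$ by \cref{lem:KL3}(\ref{lem:KL3:KD}) applied to (\ref{asm:delta}) and (\ref{asm:Mn}), and the first is at least $h(p_0,\bar p)^2$ by \cref{thm:BN}(\ref{thm:BN:KD}). A standard Hellinger-convexity calculation gives $h(p_0,\bar p)^2\gtrsim h(p_0,p)^2-O(\delta_n^2)$, hence $-P_0 m_p\gtrsim h(p_0,p)^2-\delta_n^2$ for every $p\in\mathcal{P}_{n,\delta_n}$.

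The third step is a \emph{Bernstein-norm bound} on $m_p-m_{p_n}$. By (\ref{eq:convenient}),
\[
	e^{m_p}+e^{-m_p}-2=\frac{(p-p_n)^2}{2 p_n(p+p_n)}\leq\frac{(\sqrt{p}-\sqrt{p_n})^2}{p_n},
\]
using $2(p+p_n)\geq(\sqrt p+\sqrt{p_n})^2$. Splitting the $P_0$-integral on $\{p_0/p_n\leq 4\}$ gives at most $4 h(p,p_n)^2\lesssim h(p_0,p)^2+\delta_n^2$; on the complement, whose $P_0$-mass is $\lesssim\delta_n^2$ by (\ref{asm:Mn}), we use $(\sqrt p-\sqrt{p_n})^2\leq p+p_n$ together with H\"older's inequality and (\ref{asm:Mn}). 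This yields $\|m_p-m_{p_n}\|_{P_0,B}^2\lesssim h(p_0,p)^2+\delta_n^2$, and an analogous argument shows that a Hellinger $\varepsilon$-bracket for $\mathcal{P}_{n,\delta}$ induces a Bernstein-norm $O(\varepsilon)$-bracket for $\{m_p\}$. The Bernstein-type maximal inequality \citep[Lemma 3.4.3]{vw2023} combined with (\ref{asm:entropy}) then produces the modulus-of-continuity bound $E^*\sup_{p\in\mathcal{P}_{n,\delta}}|\mathbb{G}_n(m_p-m_{p_n})|\lesssim\delta^2$ for $\delta\geq\delta_n$. A peeling argument over Hellinger shells $\{2^{j-1}\delta_n<h(p_0,p)\leq 2^j\delta_n\}$ \citep[Theorem 3.4.1]{vw2023} finally delivers $h(p_0,\hat p_n)=O_P(\delta_n\vee n^{-1/2})$.

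The main obstacle is the third step. Although (\ref{asm:Mn}) is imposed only on $p_n$, we need Bernstein-norm control \emph{uniformly} over $p\in\mathcal{P}_{n,\delta_n}$. The halving trick makes the lower tail of $m_p$ automatic via the pointwise bound $m_p\geq-\log 2$, but the upper tail—essentially $\tfrac{1}{2}\log(p/p_n)$ when $p\gg p_n$—has to be tamed on the small event $\{p_0/p_n>4\}$; this is precisely where (\ref{asm:Mn}) and the results of the preceding sections enter. Once that bound is in place, the remaining steps are routine applications of the rate-theorem machinery.
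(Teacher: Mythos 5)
Your overall scheme is right---Birg\'e--Massart halving, a population contrast inequality, a Bernstein-norm modulus bound, peeling via \citet[Theorem 3.4.1]{vw2023}---and Steps 1 and 2 are fine. But Step 3 has a genuine gap, and the gap is precisely the point where this proof departs from a routine adaptation of \citet[Theorem 3.4.12]{vw2023}.

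You take the criterion $m_p=\log\frac{p+p_n}{2p_n}$, i.e.\ you set $\theta_{n,0}=p_n$, as in the standard sieve theorem. The Bernstein-norm control then requires you to bound
\[
	P_0\bigl(e^{m_p}+e^{-m_p}-2\bigr)
	=\int\frac{p_0(p-p_n)^2}{2p_n(p+p_n)}
	\leq\int\frac{p_0}{p_n}\bigl(\sqrt p-\sqrt{p_n}\bigr)^2,
\]
and on $\{p_0/p_n>4\}$ your plan is to invoke $(\sqrt p-\sqrt{p_n})^2\leq p+p_n$, H\"older, and (\ref{asm:Mn}). This does not close: after that pointwise bound you are left with
\[
	\int\frac{p_0\,p}{p_n}\mathbbm{1}\bigl\{\tfrac{p_0}{p_n}>4\bigr\}
	\quad\text{and}\quad
	P_0\bigl(\tfrac{p_0}{p_n}>4\bigr),
\]
and the first integral (likewise the $\frac{p_0^2}{p_n}$ piece coming from $(\sqrt{p_0}-\sqrt{p_n})^2\approx p_0$ on that event) requires a \emph{first moment} of $p_0/p_n$ on the bad set, i.e.\ condition (\ref{asm:BN}) with $\delta=1$. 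H\"older on a set of small $P_0$-mass does not help, because the integrand is unbounded there. Condition (\ref{asm:Mn}) only controls $P_0\bigl[(\log\tfrac{p_0}{p_n})^2\mathbbm{1}\{\tfrac{p_0}{p_n}>4\}\bigr]$, a second log-moment, which is strictly weaker than any positive-power moment of $p_0/p_n$; \cref{sec:comparison} is in fact devoted to the strict gap between (\ref{asm:KLV}) and (\ref{asm:BN}). The same obstruction reappears in your bracketing claim: for a Hellinger bracket $[\ell,u]$, the bound on $\|m_u-m_\ell\|_{P_0,B}^2$ produces the weight $\frac{p_0}{\ell+p_n}$, which is again unbounded on $\{p_0/p_n>4\}$.

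The paper circumvents this by setting $\theta_{n,0}=p_0$, not $p_n$, i.e.\ it works with $m^p_{p_0}=\log\frac{p_0+p}{2p_0}$. Then the relevant likelihood ratio is $\frac{2p_0}{p_0+p}\leq 2$, so \cref{thm:BN} applies with a \emph{zero} multiple on the cutoff term and yields $\|m^p_{p_0}\|_{P_0,B}\leq 3\,h(p_0,p)$ with no moment condition whatsoever; the bracketing bound likewise goes through because the weight $\frac{p_0}{p_0+\ell}\leq 1$. Condition (\ref{asm:Mn}) is then invoked only once, at the very end, to control the mean and variance of $\log\frac{p_0}{p_n}$ in the ``bias'' term $\mathbb{P}_n\log\frac{p_n}{p_0}$; there the second log-moment (\ref{asm:KLV}) really is enough, because one needs $K(p_0\midd p_n)$ and $V_2(p_0\midd p_n)$, not a Bernstein-norm estimate. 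So the key idea your proposal misses is that the uniformity over $p$ in $\mathcal{P}_{n,\delta}$ must be handled with a criterion whose likelihood ratio under $P_0$ is automatically bounded, leaving (\ref{asm:Mn}) to do only the one nonuniform job it can actually do. With your choice $\theta_{n,0}=p_n$ the theorem would require the strictly stronger hypothesis $P_0\bigl(\tfrac{p_0}{p_n}\mathbbm{1}\{\tfrac{p_0}{p_n}>4\}\bigr)\lesssim\delta_n^2$.
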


\begin{rem}
Condition (\ref{asm:delta}) quantifies the sieve's approximation power in terms of the Hellinger distance, which is equivalent to the Kullback--Leibler divergence thanks to condition (\ref{asm:Mn}) and \cref{lem:KL3}.
A sufficient condition for (\ref{asm:Mn}) is that $p_0/p_n$ is uniformly bounded.
Condition (\ref{asm:entropy}) controls the local entropy of the sieve, for which a sufficient condition is the restriction of the global entropy, $\tilde{J}_{[]}(\delta_n,\mathcal{P}_n,h)\leq\delta_n^2\sqrt{n}$.
Condition (\ref{asm:max}) requires that the optimization algorithm does as good as $p_n\in\mathcal{P}_n$, so a sufficient condition is to pin down the global maximizer within a tolerance, $\mathbb{P}_n\log\hat{p}_n\geq\sup_{p\in\mathcal{P}_n}\mathbb{P}_n\log p-O_P(\delta_n^2)$.
With these sufficient conditions, \cref{thm:mle} reduces to \citet[Theorem 3.4.12]{vw2023}.
\end{rem}

\begin{rem}
It is straightforward to replace $M$ in (\ref{asm:Mn}) with $M_n\to\infty$ and obtain the rate $\delta_n\sqrt{M_n}$.
Then, replace also $\delta_n$ in (\ref{asm:entropy}) and (\ref{asm:max}) by $\delta_n\sqrt{M_n}$.
Note that even when (\ref{asm:bounded}) holds, this $M_n$ grows logarithmically slower than the multiple that arises from \citet[Lemma 3.4.10]{vw2023}.%
\footnote{To see this, use similar arguments as those in \cref{sec:ws1995,sec:CM,sec:UB}.}
\end{rem}

\begin{proof}
To exploit the trick introduced by \citet{bm1993} \citep[see][Section 3.4.4]{vw2023}, let $m_p^q\coloneqq\log\frac{p+q}{2p}$.

We will apply \citet[Theorem 3.4.1]{vw2023} to prove this theorem, where the mapping between their notation (LHS) and ours (RHS) is
\begin{align*}
	\theta&=p,\\
	\theta_{n,0}&=p_0,\\
	\theta_n&=p_n,\\
	\Theta_n&=\mathcal{P}_n,\\
	d_n(\theta,\theta_{n,0})&=h(p_0,p),\\
	\ubar{\delta}_n&=h(p_0,p_n),\\
	\delta_n&=2(\sqrt{6}+\sqrt{3})\ubar{\delta}_n,\\
	\mathbb{M}_n(\theta)&=(4+2\sqrt{2})^2\mathbb{P}_n m_{p_0}^p,\\ %
	M_n(\theta)&=(4+2\sqrt{2})^2 P_0 m_{p_0}^p,\\ %
	\phi_n(\delta)&=\text{an appropriate majorant of }\tilde{J}_{[]}(\delta,\mathcal{P}_{n,\delta},h)\Bigl[1+\tfrac{\tilde{J}_{[]}(\delta,\mathcal{P}_{n,\delta},h)}{\delta^2\sqrt{n}}\Bigr].
\end{align*}
A key departure from \citet[Theorem 3.4.12]{vw2023} is that we do not let $\theta_{n,0}=p_n$ but set $\theta_{n,0}=p_0$.
This saves us unnecessary complications that arise from dealing with ``misspecification'' of $\mathcal{P}_n$.

We begin by introducing a useful inequality for later use.
Since $(1-\frac{1}{\sqrt{2}})^2(\sqrt{p_0}-\sqrt{p})^2\leq(\sqrt{p_0}-\sqrt{\frac{p_0+p}{2}})^2\leq\frac{1}{2}(\sqrt{p_0}-\sqrt{p})^2$, we have
\begin{equation} \label{eq:halfhellinger}
	\bigl(1-\tfrac{1}{\sqrt{2}}\bigr)^2 h(p_0,p)^2\leq h\bigl(p_0,\tfrac{p_0+p}{2}\bigr)^2\leq\tfrac{1}{2}h(p_0,p)^2.
\end{equation}
Second, since $0\leq\frac{2p_0}{p_0+p}\leq 2$, when we compare $\frac{p_0+p}{2}$ against $p$, the LHS of (\ref{asm:BN}) is zero (so the multiple is zero).

Now, we verify each condition of \citet[Theorem 3.4.1]{vw2023}.
The first condition is that for every $n$ and $\delta>\ubar{\delta}_n$,
\[
	\sup_{p\in\mathcal{P}_n:\frac{\delta}{2}<h(p_0,p)\leq\delta}(4+2\sqrt{2})^2\bigl(P_0 m_{p_0}^p-P_0 m_{p_0}^{p_0}\bigr)\leq-\delta^2.
\]
This immediately follows from \cref{thm:BN} (\ref{thm:BN:KD}) and (\ref{eq:halfhellinger}) since %
\begin{multline*}
	P_0 m_{p_0}^p-P_0 m_{p_0}^{p_0}=-P_0\log\tfrac{2p_0}{p_0+p}\\
	\leq-h\bigl(p_0,\tfrac{p_0+p}{2}\bigr)^2
	\leq-\bigl(1-\tfrac{1}{\sqrt{2}}\bigr)^2 h(p_0,p)^2
	\leq-\bigl(1-\tfrac{1}{\sqrt{2}}\bigr)^2\tfrac{\delta^2}{4}
	=-\tfrac{\delta^2}{(4+2\sqrt{2})^2}.
\end{multline*}
The second condition to establish is
\[
	\mathbb{E}^*\sup_{p\in\mathcal{P}_n:h(p_0,p)\leq\delta}(4+2\sqrt{2})^2\sqrt{n}\bigl|(\mathbb{P}_n-P_0)m_{p_0}^p-(\mathbb{P}_n-P_0)m_{p_0}^{p_0}\bigr|\lesssim\phi_n(\delta).
\]
Note that \cref{thm:BN} and (\ref{eq:halfhellinger}) yield
\[
	\lVert m_{p_0}^p\rVert_{P_0,B}=\bigl\|\log\tfrac{2p_0}{p_0+p}\bigr\|_{P_0,B}\leq\sqrt{18}h\bigl(p_0,\tfrac{p_0+p}{2}\bigr)\leq 3\delta.
\]
Thus, in light of \citet[Theorem 2.14.18$'$]{vw2023}, we have
\begin{multline*}
	\mathbb{E}^*\sup_{p\in\mathcal{P}_n:h(p_0,p)\leq\delta}\sqrt{n}\bigl|(\mathbb{P}_n-P_0)m_{p_0}^p-(\mathbb{P}_n-P_0)m_{p_0}^{p_0}\bigr|\\
	\lesssim\tilde{J}_{[]}(3\delta,\mathcal{M}_{n,\delta},\lVert\cdot\rVert_{P_0,B})\Bigl[1+\tfrac{\tilde{J}_{[]}(3\delta,\mathcal{M}_{n,\delta},\lVert\cdot\rVert_{P_0,B})}{9\delta^2\sqrt{n}}\Bigr],
\end{multline*}
where $\mathcal{M}_{n,\delta}=\{m_{p_0}^p:p\in\mathcal{P}_n,h(p_0,p)\leq\delta\}$.
Let $[\ell,u]$ be an $\varepsilon$\hyp{}bracket in $\mathcal{P}$ with respect to $h$.
Since $u\geq\ell$ and $e^{\lvert x\rvert}-1-\lvert x\rvert\leq 2(e^{x/2}-1)^2$ for $x\geq 0$, we have
\[
	\lVert m_{p_0}^u-m_{p_0}^\ell\bigr\|_{P_0,B}^2
	\leq 4 P_0\bigl(\sqrt{\tfrac{p_0+u}{p_0+\ell}}-1\bigr)^2
	\leq 4\int\bigl(\sqrt{p_0+u}-\sqrt{p_0+\ell}\bigr)^2\leq 4 h(u,\ell)^2. %
\]
Thus, $[m_{p_0}^\ell,m_{p_0}^u]$ makes a $2\varepsilon$\hyp{}bracket in $\mathcal{M}$ with respect to the Bernstein ``norm'', so
\(
	N_{[]}(2\varepsilon,\mathcal{M}_{n,\delta},\lVert\cdot\rVert_{P_0,B})\leq N_{[]}(\varepsilon,\mathcal{P}_{n,\delta},h)
\).
This implies
\begin{align*}
	\tilde{J}_{[]}(3\delta,\mathcal{M}_{n,\delta},\lVert\cdot\rVert_{P_0,B})
	&=\int_0^{3\delta}\sqrt{1+\log N_{[]}(\varepsilon,\mathcal{M}_{n,\delta},\lVert\cdot\rVert_{P_0,B})}d\varepsilon \\
	&\leq\int_0^{3\delta}\sqrt{1+\log N_{[]}(\tfrac{\varepsilon}{2},\mathcal{P}_{n,\delta},h)}d\varepsilon \\
	&=2\int_0^{\frac{3}{2}\delta}\sqrt{1+\log N_{[]}(\varepsilon,\mathcal{P}_{n,\delta},h)}d\varepsilon \\
	&=2\tilde{J}_{[]}(\tfrac{3}{2}\delta,\mathcal{P}_{n,\delta},h)
	\leq 3\tilde{J}_{[]}(\delta,\mathcal{P}_{n,\delta},h).
\end{align*}
Thus, we obtain
\[
	\mathbb{E}^*\sup_{p\in\mathcal{P}_{n,\delta}}\sqrt{n}\bigl|(\mathbb{P}_n-P_0)m_{p_0}^p-(\mathbb{P}_n-P_0)m_{p_0}^{p_0}\bigr|
	\lesssim\tilde{J}_{[]}(\delta,\mathcal{P}_{n,\delta},h)\Bigl[1+\tfrac{\tilde{J}_{[]}(\delta,\mathcal{P}_{n,\delta},h)}{\delta^2\sqrt{n}}\Bigr].
\]
Since $\tilde{J}_{[]}$ is increasing and concave in $\delta$, the RHS has a majorant $\phi_n(\delta)$ that is increasing in $\delta\geq\ubar{\delta}_n$ and, if divided by $\delta^\alpha$ for $1<\alpha<2$, is decreasing in $\delta$.

We now check the properties of $p_n$ and $\delta_n$ required by \citet[Theorem 3.4.1]{vw2023}.
By (\ref{asm:entropy}), we have
\[
	\tilde{J}_{[]}(\delta_n,\mathcal{P}_{n,\delta_n},h)\Bigl[1+\tfrac{\tilde{J}_{[]}(\delta_n,\mathcal{P}_{n,\delta_n},h)}{\delta_n^2\sqrt{n}}\Bigr]\leq 2\delta_n^2\sqrt{n}.
\]
Therefore, $\phi_n$ can be made to satisfy $\phi_n(\delta_n)\leq\delta_n^2\sqrt{n}$.
Next, we show that
\[
	\delta_n^2\geq(4+2\sqrt{2})^2\bigl(P_0 m_{p_0}^{p_0}-P_0 m_{p_0}^{p_n}\bigr).
\]
This can be verified using \cref{lem:KL3} (\ref{lem:KL3:KD}) and (\ref{eq:halfhellinger}),
\[
	P_0 m_{p_0}^{p_0}-P_0 m_{p_0}^{p_n}
	=P_0\log\tfrac{2p_0}{p_0+p_n}
	\leq 3 h\bigl(p_0,\tfrac{p_0+p_n}{2}\bigr)^2
	\leq \tfrac{3}{2} h(p_0,p_n)^2.
\]
Hence, by letting $\delta_n=(4+2\sqrt{2})\sqrt{3/2}\ubar{\delta}_n=2(\sqrt{6}+\sqrt{3})\ubar{\delta}_n$, we have $\delta_n^2\geq(4+2\sqrt{2})^2(P_0 m_{p_0}^{p_0}-P_0 m_{p_0}^{p_n})$ as well as $\delta_n\geq\ubar{\delta}_n$.

Finally, it remains to show that
\(
	\mathbb{P}_n m_{p_0}^{\hat{p}_n}\geq\mathbb{P}_n m_{p_0}^{p_n}-O_P(\delta_n^2)
\).
By the convexity of the logarithm,
\[
	2\mathbb{P}_n m_{p_0}^{\hat{p}_n}
	=2\mathbb{P}_n\log\tfrac{p_0+\hat{p}_n}{2p_0}
	\geq\mathbb{P}_n\log\tfrac{p_0}{p_0}+\mathbb{P}_n\log\tfrac{\hat{p}_n}{p_0}
	\geq\mathbb{P}_n\log\tfrac{p_n}{p_0}+\mathbb{P}_n\log\tfrac{\hat{p}_n}{p_n}.
\]
The second term is bounded from below by $-O_P(\delta_n^2)$ by (\ref{asm:max}).
The first term can be decomposed as
\(
	\mathbb{P}_n\log\tfrac{p_n}{p_0}=-(\mathbb{P}_n-P_0)\log\tfrac{p_0}{p_n}-P_0\log\tfrac{p_0}{p_n}
\).
Combined together, it suffices to show that
\[
	-\tfrac{1}{2}(\mathbb{P}_n-P_0)\log\tfrac{p_0}{p_n}-\tfrac{1}{2}P_0\log\tfrac{p_0}{p_n}+(\mathbb{P}_n-P_0)\log\tfrac{2p_0}{p_0+p_n}+P_0\log\tfrac{2p_0}{p_0+p_n}\geq-O_P(\delta_n^2).
\]
For the nonrandom terms, we apply \cref{lem:KL3} (\ref{lem:KL3:KD}) and (\ref{lem:KL3:BTW}) for $k=2$, (\ref{asm:Mn}), (\ref{eq:halfhellinger}), and (\ref{asm:delta}) to obtain
\begin{align*}
	-\tfrac{1}{2}P_0\log\tfrac{p_0}{p_n}+P_0\log\tfrac{2p_0}{p_0+p_n}&\geq-\tfrac{1}{2}\bigl(3+4\sqrt{M}\bigr)\delta_n^2+h\bigl(p_0,\tfrac{p_0+p_n}{2}\bigr)^2\\
	&\geq\bigl[-\tfrac{3}{2}-2\sqrt{M}+\bigl(1-\tfrac{1}{\sqrt{2}}\bigr)^2\bigr]\delta_n^2\geq-O_P(\delta_n^2).
\end{align*}
To bound the remaining random terms,
observe that, since $\{X_i\}$ is a random sample,
\[
	\Var\bigl((\mathbb{P}_n-P_0)\log\tfrac{p_0}{p_n}\bigr)=\tfrac{1}{n}V_{2,0}(p_0\midd p_n)\leq\tfrac{1}{n}V_2(p_0\midd p_n)\leq\tfrac{8+M}{n}\delta_n^2,
\]
where the last inequality follows from \cref{lem:KL3} (\ref{lem:KL3:KV}) for $k=2$, (\ref{asm:delta}), and (\ref{asm:Mn}).
Thus,
\[
	(\mathbb{P}_n-P_0)\log\tfrac{p_0}{p_n}=O_P\bigl(\tfrac{\delta_n}{\sqrt{n}}\bigr).
\]
(The fact that this is not $O_P(\delta_n^2)$ makes the rate of convergence at least as slow as $n^{-1/2}$.)
The same argument %
implies $(\mathbb{P}_n-P_0)\log\tfrac{2p_0}{p_0+p_n}=O_P(\delta_n/\sqrt{n})$.
We have thus verified all assumptions of \citet[Theorem 3.4.1]{vw2023}.
\end{proof}

The new regularity condition imposed in \cref{thm:mle} is (\ref{asm:Mn}).
This is of course weaker than uniform boundedness, but note also that it is based on (\ref{asm:KLV}), not on (\ref{asm:BN}), even though the proof uses the Bernstein ``norm''.
This is thanks to the trick introduced by \citet{bm1993} and to setting $\theta_{n,0}=p_0$, unlike $\theta_{n,0}=p_n$ in \citet[Theorem 3.4.12]{vw2023}.%
\footnote{It is possible to further relax (\ref{asm:Mn}) by using a more general weak law of large numbers such as \citet{g1992w} when bounding $(\mathbb{P}_n-P_0)\log\frac{p_0}{p_n}$.}

The next example contrasts \cref{thm:mle} with \citet[Theorem 3.4.12]{vw2023}.
To keep it simple, we contrive a ``sieve'' version of the normal location model that is covered by the former but not by the latter.

\begin{exa}[Sieve normal location model]
Let $\mathcal{P}=\{P_\theta=N(\theta,1):\theta\in\mathbb{R}\}$ and $P_0=N(0,1)$.
Consider a sieve $\mathcal{P}_n=\{p_\theta\in\mathcal{P}:\lvert\theta\rvert\geq 1/\sqrt{n}\}$, intentionally excluding $p_0$.
As discussed in \cref{exa:normal}, this model does not have bounded likelihood ratios and hence falls outside the scope of \citet[Theorem 3.4.12]{vw2023}.
To establish the parametric rate, we set $\theta_n=1/\sqrt{n}$, $p_n=p_{\theta_n}$, and $\delta_n\sim 1/\sqrt{n}$, and check the assumptions of \cref{thm:mle}.
Observe that for the normal location model,
\[
	h(p_{\theta_1},p_{\theta_2})^2=2-2 e^{-\frac{(\theta_1-\theta_2)^2}{8}}. %
\]
Thus, we have $h(p_0,p_n)=O(n^{-1/2})$, and (\ref{asm:delta}) is satisfied.
In \cref{exa:normal}, it is verified that the normal location model satisfies (\ref{asm:local}), and since (\ref{asm:local}) implies (\ref{asm:KLV}), it satisfies (\ref{asm:Mn}).
Since (\ref{asm:max}) is satisfied by the maximum likelihood estimator, it remains to show (\ref{asm:entropy}).
For $\delta_n\sim n^{-1/2}$, it boils down to
\[
	\tilde{J}_{[]}(\delta,\mathcal{P}_{n,\delta},h)=O(\delta),
\]
or equivalently, that $N_{[]}(\delta,\mathcal{P}_{n,\delta},h)$ is uniformly bounded over $\delta>0$.
The closed\hyp{}form expression of the Hellinger distance implies that $h(p_0,p_\theta)$ is %
 first\hyp{}order approximated by $\lvert\theta\rvert/2$ around $\theta=0$.
Therefore, in the neighborhood of $\theta=0$, we may interchangeably use $\lvert\theta\rvert\leq\delta$ and $h(p_0,p_\theta)\leq\delta/2$.

Let $[\ell,u]\subset\mathbb{R}$ be a bracket in $\mathbb{R}$ and consider a bracket $[p_L,p_U]$ in $\mathcal{P}$ of the form
\[
	p_L(x)=\inf_{\theta\in[\ell,u]}p_\theta(x)
	=\begin{cases}
	p_u(x)&x<\frac{u-\ell}{2},\\
	p_\ell(x)&x\geq\frac{u-\ell}{2},
	\end{cases}
	\quad
	p_U(x)=\sup_{\theta\in[\ell,u]}p_\theta(x)
	=\begin{cases}
	p_\ell(x)&x<\ell,\\
	p_0(0)&x\in[\ell,u],\\
	p_u(x)&x>u.
	\end{cases}
\]
Note that this bracket contains every distribution $p_\theta$ for $\ell\leq\theta\leq u$.
Since $N_{[]}(\delta,\{\theta\in\mathbb{R}:\lvert\theta\rvert\leq\delta\},\lvert\cdot\rvert)$ is independent of $\delta$ (namely, $1$), if we show that $h(p_U,p_L)=O(u-\ell)$, then $N_{[]}(\delta,\mathcal{P}_{n,\delta},h)$ can be bounded by a constant independent of $\delta$, establishing (\ref{asm:entropy}).
Observe that
\begin{align*}
	h(p_U,p_L)^2 %
	&=\int_{-\infty}^\infty\bigl(\sqrt{p_u}-\sqrt{p_\ell}\bigr)^2+\int_\ell^u\bigl[\bigl(\sqrt{p_0(0)}-\sqrt{p_L}\bigr)^2-\bigl(\sqrt{p_U}-\sqrt{p_L}\bigr)^2\bigr]\\
	&\leq h(p_u,p_\ell)^2+\int_\ell^u\bigl(\sqrt{p_0(0)}-\sqrt{p_u(\ell)}\bigr)^2
	=h(p_u,p_\ell)^2+o((u-\ell)^2).
\end{align*}
Thus, we have shown that $h(p_U,p_L)=O(u-\ell)$.
Therefore, \cref{thm:mle} implies that the maximum likelihood estimator converges at rate $n^{-1/2}$.
\end{exa}

Although this example is deliberately simple, it demonstrates the capacity of \cref{thm:mle} to deliver sharp rates in models that escaped previously discussed conditions.

Additional applications of \cref{thm:BN,lem:KL3} arise in the study of posterior contraction rates in nonparametric Bayesian inference. As noted by \citet[p.~199]{gv2017}, once the equivalence between the Hellinger distance and the Kullback--Leibler divergence and variation distance is established, \citet[Theorem 8.9]{gv2017} can be reformulated solely in terms of the Hellinger distance. A further application is the relaxation of the boundedness assumption in \citet[Lemma 9.4 (iii)]{gv2017}. Finally, \cref{thm:BN,lem:KL3} may also help connect minimax convergence rates under different loss functions; see, for example, that similar inequalities are being used in \citet[Lemma 4.4]{b1983}, \citet[Lemma 14]{bbm1999}, and \citet[Lemma 2]{yb1999}.

\section{Conclusion} \label{sec:conc}

We established sharp Hellinger dominance for likelihood-based discrepancy measures under minimal moment conditions.
In particular, we developed the necessary and sufficient conditions for the Hellinger bounds over the fractional Bernstein ``norm'' of the log-likelihood ratio (\cref{thm:BN}), the Kullback--Leibler divergence (\cref{lem:KL3} (\ref{lem:KL3:KD})), and the Kullback--Leibler variation (\cref{lem:KL3} (\ref{lem:KL3:KV})).
They accommodate unbounded likelihood ratios and generalize all known results.
In all cases, it boils down to controlling the behavior of the integrands on an unfavorable event $\{\frac{p_0}{p}>4\}$.

We then compared the sufficient conditions in the literature with our necessary and sufficient conditions.
It was shown that the following implications hold.%
\[
	\text{(\ref{asm:bounded})}\implies\text{(\ref{asm:local})}\implies
	\begin{array}[b]{@{}c@{}}
	\clap{\text{(\ref{eq:BN})}}\\
	\rotatebox[origin=c]{90}{\vphantom{$\centernot\Longleftrightarrow$}$\Longleftrightarrow$}\\
	\text{(\ref{asm:BN})}
	\end{array}
	\iff
	\tikzmarknode{D2}{\text{(\ref{asm:D})}}
	\implies
	\begin{array}[b]{@{}c@{}}
		\clap{\text{(\ref{eq:KLV})}}\\
		\rotatebox[origin=c]{90}{\vphantom{$\centernot\Longleftrightarrow$}$\Longleftrightarrow$}\\
		\text{(\ref{asm:KLV})}
	\end{array}
	\implies
	\begin{array}[b]{@{}c@{}}
		\clap{\text{(\ref{eq:KLD})}}\\
		\rotatebox[origin=c]{90}{\vphantom{$\centernot\Longleftrightarrow$}$\Longleftrightarrow$}\\
		\text{(\ref{asm:KLD})}
	\end{array}
\]

\noindent
The minimal assumption that implies the Hellinger dominance on all three discrepancy measures is (\ref{asm:BN}) with some $\delta\in(0,1]$, and the minimal assumption for the Kullback--Leibler divergence and variation is (\ref{asm:KLV}).

Next, we applied our results to relax the bounded likelihood ratio condition in nonparametric sieve maximum likelihood estimation.
\cref{thm:mle} introduces a new regularity condition (\ref{asm:Mn}), which allows for unbounded likelihood ratios without compromising the convergence rate.
The example based on the normal location model demonstrated the potential usefulness of this generalization.
Other possible applications include posterior contraction rates in nonparametric Bayesian inference and minimax convergence rates in nonparametric estimation.

\bibliographystyle{econ}
\bibliography{hellingerbib}
\addcontentsline{toc}{section}{\refname}

\end{document}